\newmdtheoremenv[%
backgroundcolor=green!10,%
outerlinecolor=black,%
leftmargin=0,%
rightmargin=0,
innertopmargin =3pt,%
innerleftmargin = 5pt,
innerrightmargin = 5pt,
splittopskip = \topskip,%
skipabove = \baselineskip,%
skipbelow = \baselineskip,%
roundcorner=5, ntheorem]
{theorem}{Theorem}[section]
\newtheorem{corollary}{Corollary}[section]
\newtheorem{lemma}{Lemma}[section]
\newtheorem{definition}{Definition}[section]
\newtheorem{assumption}{Assumption}[section]
\newtheorem{remark}{Remark}[section]
\newenvironment{proof}{{\noindent\it Proof.}\quad}{\hfill $\square$\\}
\begin{document}
\title{On the quadrature exactness in hyperinterpolation}

\author{Congpei An\footnotemark[1]
       \quad\quad Hao-Ning Wu\footnotemark[2]\\~\\

       \emph{Dedicated to Ian H. Sloan on the occasion of his 85th birthday.}}

\renewcommand{\thefootnote}{\fnsymbol{footnote}}
\footnotetext[1]{School of Mathematics, Southwestern University of Finance and Economics, Chengdu, China (ancp@swufe.edu.cn, andbachcp@gmail.com).}
\footnotetext[2]{Department of Mathematics, The University of Hong Kong, Hong Kong, China (hnwu@connect.hku.hk).}


\maketitle

\begin{abstract}
 This paper investigates the role of quadrature exactness in the approximation scheme of hyperinterpolation. Constructing a hyperinterpolant of degree $n$ requires a positive-weight quadrature rule with exactness degree $2n$. We examine the behavior of such approximation when the required exactness degree $2n$ is relaxed to $n+k$ with $0<k\leq n$. Aided by the Marcinkiewicz--Zygmund inequality, we affirm that the $L^2$ norm of the  exactness-relaxing hyperinterpolation operator is bounded by a constant independent of $n$, and this approximation scheme is convergent as $n\rightarrow\infty$ if $k$ is positively correlated to $n$. Thus, the family of candidate quadrature rules for constructing hyperinterpolants can be significantly enriched, and the number of quadrature points can be considerably reduced. As a potential cost, this relaxation may slow the convergence rate of hyperinterpolation in terms of the reduced degrees of quadrature exactness. Our theoretical results are asserted by numerical experiments on three of the best-known quadrature rules: the Gauss quadrature, the Clenshaw--Curtis quadrature, and the spherical $t$-designs.
\end{abstract}

\textbf{Keywords: }{hyperinterpolation, quadrature, exactness, Marcinkiewicz--Zygmund inequality}

\textbf{AMS subject classifications.} 65D32, 41A10, 41A55

\section{Introduction}

Let $\Omega$ be a bounded region of $\mathbb{R}^s$ with measure $\text{d}\omega$, which is either the closure of a connected open domain, or a smooth closed lower-dimensional manifold in $\mathbb{R}^s$. This region is assumed to have finite measure with respect to $\text{d}\omega$, that is,
$$\int_{\Omega}\text{d}\omega=V<\infty. $$
We denote by $\mathbb{P}_n\subset L^2(\Omega)$ the linear space of polynomials on $\Omega$ of degree at most $n$, equipped with the $L^2$ inner product
\begin{equation}\label{equ:innerproduct}
\langle v,z\rangle = \int_{\Omega}vz\text{d}\omega,
\end{equation}
and we let $\{p_1,p_2\ldots,p_{d_n}\}\subset\mathbb{P}_n$ be an orthonormal basis of $\mathbb{P}_n$ in the sense of $\langle p_{\ell},p_{\ell'}\rangle = \delta_{\ell\ell'}$ for $1\leq \ell,\ell'\leq d_n$, where $d_n=\dim\mathbb{P}_n$ is the dimension of $\mathbb{P}_n$ . Constructing hyperinterpolants requires an $m$-point quadrature rule of the form
\begin{equation}\label{equ:quad}
\sum_{j=1}^mw_jg(x_j)\approx \int_{\Omega}g\text{d}\omega,
\end{equation}
where the quadrature points $x_j$ belong to $\Omega$ and weights $w_j$ are all positive for $j=1,2,\ldots,m$; we refer the reader to the classic book \cite{MR760629} for a comprehensive introduction in numerical integration. With the assumption that the quadrature rule \eqref{equ:quad} has exactness degree $2n$, i.e.,
$$\sum_{j=1}^mw_jg(x_j) = \int_{\Omega}g\text{d}\omega\quad \forall g\in\mathbb{P}_{2n},$$
the hyperinterpolation operator $\mathcal{L}_n:\mathcal{C}(\Omega)\rightarrow\mathbb{P}_n$, introduced by Sloan in \cite{sloan1995polynomial}, maps a continuous function $f\in\mathcal{C}(\Omega)$ on $\Omega$ to
\begin{equation}\label{equ:hyperinterpolation}
\mathcal{L}_nf:=\sum_{\ell=1}^{d_n}\langle f,p_{\ell}\rangle_mp_{\ell},
\end{equation}
where
$$\langle v,z\rangle_m:=\sum_{j=1}^mw_jv(x_j)z(x_j)$$
is a ``discrete version'' of the $L^2$ inner product \eqref{equ:innerproduct}. Thus, hyperinterpolation can be regarded as a discrete version of the orthogonal projection from $\mathcal{C}(\Omega)$ onto $\mathbb{P}_n$ with respect to \eqref{equ:innerproduct}.

The bulk of the subsequent development on hyperinterpolation was on the sphere, see \cite{dai2006generalized,MR2274179,le2001uniform,reimer2002generalized,zbMATH01421286}. Hyperinterpolation was also investigated on many other regions, such as the disk \cite{hansen2009norm}, the square \cite{caliari2007hyperinterpolation}, the cube \cite{caliari2008hyperinterpolation,wang2014norm}, and the spherical triangles \cite{sommariva2021numerical}. In all of these references, the exactness degree $2n$ of the quadrature rule \eqref{equ:quad} is a central assumption in constructing hyperinterpolants. This assumption was also maintained in some variants of hyperinterpolation, such as the filtered hyperinterpolation \cite{sloan2012filtered} (even more degrees are required) and the Lasso hyperinterpolation \cite{an2021lasso}.

Moreover, if one considers hyperinterpolation on some regions where quadrature theory has not been well established, this exactness assumption has also potentially spurred the development of quadrature theory and orthogonal polynomials on these regions. Indeed, quadrature exactness contributes to the standard principle for designing quadrature rules: they should be exact for a certain class of integrands, e.g., polynomials under a fixed degree. This exactness principle is the departing point of most discussions on quadrature. Still, there has been growing concern recently about whether this principle is reliable in designing quadrature rules, as discussed by Trefethen in \cite{trefethen2022exactness}. The main message of \cite{trefethen2022exactness} is that the exactness principle proves to be an unreliable guide to actual accuracy. According to Trefethen, the exactness principle is a matter of algebra, concerned with whether or not certain quantities are exactly zero; however, quadrature is a problem of analysis, focusing on whether or not certain quantities are small. Thus, we are intrigued to know whether the required exactness degree $2n$ in constructing hyperinterpolants of degree $n$ is superfluous.

This question is answered as the main results of this paper: When $2n$ is relaxed to $n+k$, where $0<k\leq n$, i.e., reduced at least to $n+1$, the norm of $\mathcal{L}_n$ as an operator from $\mathcal{C}(\Omega)$ to $L^2(\Omega)$ is bounded by some constant, and the error estimate $\|\mathcal{L}_nf-f\|_2$ is bounded in terms of $E_k(f)$, which is the best uniform error of $f$ by a polynomial in $\mathbb{P}_k$. In addition, if $k$ is positively correlated to $n$, then the scheme of hyperinterpolation is convergent as $n\rightarrow \infty$. This relaxation helps hyperinterpolation to get rid of the disadvantage that, remarked by Hesse and Sloan in \cite{MR2274179}, it needs function values at the given points of the positive-weight quadrature rule with exactness degree $2n$. In real-world applications, data sampling may be expensive. This relaxation may enlighten us to develop hyperinterpolation-based methods for problems that are in favor of a high-order approximation but against extensive data sampling. When data sampling is cheap, this relaxation may also help to speed up our computation.

We note that the generalized hyperinterpolation \cite{dai2006generalized,reimer2002generalized}, defined on the sphere, only requires a positive-weight quadrature rule with exactness degree $n+1$ rather than $2n$. However, the definition of this scheme is different from that of the original hyperinterpolation. In this paper, we focus on the original hyperinterpolation and investigate the effects of relaxing the quadrature exactness. Moreover, our investigation pertains to a general region $\Omega$, while the generalized hyperinterpolation is only studied on the sphere.

In the next section, we present the main theoretical results on the exactness-relaxing hyperinterpolation, with the proof of our main Theorem \ref{thm} given in Section \ref{sec:proof}. To verify our theory, we conduct some numerical experiments on the interval $[-1,1]$ and the unit sphere $\mathbb{S}^2$ in Section \ref{sec:3}.

\section{Main results}
The hyperinterpolant of degree $n$ with an exactness-relaxing quadrature rule is defined as follows.

\begin{assumption}\label{assumption}
The $m$-point quadrature rule \eqref{equ:quad}, with nodes $x_j\in\Omega$ and weights $w_j>0$ for $j=1,2,\ldots,m$, has exactness degree $n+k$ with $0<k\leq n$, where $n,k\in\mathbb{N}$.
\end{assumption}
\begin{definition}[Hyperinterpolation with an exactness-relaxing quadrature rule]\label{def}
Let $\langle \cdot,\cdot\rangle_m$ be an $m$-point quadrature rule fulfilling Assumption \ref{assumption} and $\{p_{\ell}\}_{\ell=1}^{d_n}\subset\mathbb{P}_n$ be an orthonormal basis of $\mathbb{P}_n$. Given $f\in\mathcal{C}(\Omega)$, the hyperinterpolant of degree $n$ to $f$ is defined as
\begin{equation}\label{equ:newhyper}
\mathcal{L}_nf:=\sum_{\ell=1}^{d_n}\langle f,p_{\ell}\rangle_mp_{\ell}.
\end{equation}
\end{definition}
This scheme \eqref{equ:newhyper} is essentially the hyperinterpolation scheme \eqref{equ:hyperinterpolation}, except that the degree of quadrature exactness is relaxed. Thus the scheme \eqref{equ:newhyper} is also a discrete version of the orthogonal projection from $\mathcal{C}(\Omega)$ onto $\mathbb{P}_n$ with respect to the $L^2$ inner product \eqref{equ:innerproduct}. To tell the difference between schemes \eqref{equ:hyperinterpolation} and \eqref{equ:newhyper}, we refer to Sloan's hyperinterpolation as the \emph{original hyperinterpolation}. We denote by $\mathcal{L}_n^{\text{S}}$ the original hyperinterpolation operator in the following texts, where \emph{S} stands for Sloan.

What kind of benefits and costs does the relaxation of quadrature exactness bring to the analysis and implementation of hyperinterpolation? Here is an immediate benefit. We know that an $m$-point quadrature rule with exactness degree $2n$ requires $m\geq d_n$ quadrature points, see \cite[Lemma 2]{sloan1995polynomial}, and such a quadrature rule is said to be \emph{minimal} if $m=d_n$. This fact suggests that $m$ should satisfy $m\geq d_n$ for $\mathcal{L}^{\text{S}}_n$, and it also admits the following rather simple but interesting theorem.
\begin{theorem}\label{thm:number}
The number of quadrature points for the hyperinterpolation \eqref{equ:newhyper} satisfies
\begin{equation*}
m\geq \begin{cases}
d_{(n+k)/{2}}= \dim\mathbb{P}_{(n+k)/2},& \text{ when }n+k \text{ is even},\\
d_{(n+k+1)/{2}}= \dim\mathbb{P}_{(n+k+1)/{2}},& \text{ when }n+k \text{ is odd}.
\end{cases}
\end{equation*}
\end{theorem}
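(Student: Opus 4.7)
The plan is to adapt Sloan's classical argument for the lower bound on quadrature nodes (see \cite[Lemma~2]{sloan1995polynomial}), combining an evaluation pigeonhole step on polynomials with positivity of the weights. The overall structure is: assume there are too few nodes, produce a nonzero polynomial that vanishes at all of them, and then use exactness on its square (or a suitable product) to derive a contradiction.

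I would set $N$ to be the integer appearing on the right-hand side of the claim, i.e.\ $N = (n+k)/2$ in the even case and $N = (n+k+1)/2$ in the odd case. Supposing for contradiction that $m < d_N$, the linear evaluation map $\mathbb{P}_N \to \mathbb{R}^m$, $p \mapsto (p(x_1),\dots,p(x_m))$, has domain of dimension $d_N > m$, hence by rank--nullity a nontrivial kernel; this yields a nonzero $p \in \mathbb{P}_N$ with $p(x_j) = 0$ for all $j = 1,\dots,m$.

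In the even case, $p^2 \in \mathbb{P}_{2N} = \mathbb{P}_{n+k}$ immediately, so Assumption~\ref{assumption} gives
\begin{equation*}
\|p\|_2^2 = \int_\Omega p^2 \,\mathrm{d}\omega = \sum_{j=1}^m w_j\, p(x_j)^2 = 0,
\end{equation*}
and positivity of the weights combined with $p$ being a nonzero element of $L^2(\Omega)$ produces the desired contradiction. For the odd case, I would aim for an analogous argument using products $p \cdot q$ with $q \in \mathbb{P}_{(n+k-1)/2}$, so that $pq \in \mathbb{P}_{n+k}$, to deduce the $L^2$-orthogonality $\langle p, q\rangle = 0$ for every such $q$, and then leverage this orthogonality to conclude $\|p\|_2 = 0$.

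The main obstacle is precisely this odd-parity case: the direct squared-polynomial recipe only reaches $m \ge d_{(n+k-1)/2}$, and squeezing out the extra dimension needed for the claimed $m \ge d_{(n+k+1)/2}$ requires a finer use of the relaxed exactness---for instance, a Gram-matrix rank argument on an orthonormal basis of $\mathbb{P}_{(n+k+1)/2}$ that exploits orthogonality against all of $\mathbb{P}_{(n+k-1)/2}$, or an appeal to additional structural features of the manifold $\Omega$. The even-parity case, by contrast, is settled by the two-line pigeonhole/positivity argument above.
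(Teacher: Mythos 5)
Your even-parity argument is correct and is essentially the paper's own route: the paper does not re-derive this step but simply invokes \cite[Lemma 2]{sloan1995polynomial} (positive weights plus exactness degree $2t$ force $m\geq d_t$), applied here with $2t=n+k$; your rank--nullity-plus-positivity proof is just that lemma written out.

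The genuine gap is the odd-parity case, and it is not one you can close, because the step you hope to ``squeeze out'' would fail. Your proposed continuation --- take $0\neq p\in\mathbb{P}_{(n+k+1)/2}$ vanishing at all nodes and use exactness on the products $pq$ with $q\in\mathbb{P}_{(n+k-1)/2}$ --- only shows that $p$ is $L^2$-orthogonal to $\mathbb{P}_{(n+k-1)/2}$. That is no contradiction: nonzero polynomials of degree $(n+k+1)/2$ orthogonal to all polynomials of lower degree always exist (they are exactly the orthogonal polynomials of that degree), and they can vanish at every quadrature node. The interval makes this concrete: for $\Omega=[-1,1]$ with $n=2$, $k=1$, the two-point Gauss--Legendre rule has positive weights and exactness degree $3=n+k$, yet $m=2<3=d_2=d_{(n+k+1)/2}$; the kernel polynomial is the Legendre polynomial of degree $2$, whose zeros are precisely the two Gauss nodes. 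The same tension appears inside the paper itself: the Gauss-quadrature corollary in Section \ref{sec:3} admits $m$-point Gauss rules with exactness degree $2m-1=n+k$ (always odd), for which $d_{(n+k+1)/2}=d_m=m+1>m$. So no Gram-matrix refinement or appeal to extra structure of $\Omega$ can deliver the displayed odd-case bound in general; what actually follows from Assumption \ref{assumption} --- and what your even-case argument already gives when applied to exactness degree $n+k-1$ --- is $m\geq d_{(n+k-1)/2}=d_{\lfloor (n+k)/2\rfloor}$. In short, the obstacle you flagged reflects a defect in the stated inequality (it would require exactness degree $n+k+1$, or the index lowered to $(n+k-1)/2$), not a missing idea in your strategy.
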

The benefit brought by the theorem is two-fold. On the one hand, for minimal quadrature rules used in constructing hyperinterpolants, the required amount of quadrature points can be considerably reduced from $\dim\mathbb{P}_{n}$ to $\dim\mathbb{P}_{(n+k)/{2}}$ or $\dim\mathbb{P}_{(n+k+1)/{2}}$, depending on the parity of $n+k$. Such reduction is more pronounced in higher-dimensional regions. On the other hand, for quadrature rules demanding more nodes to achieve the exactness degree $2n$, which used to be deemed impractical, some of them can be added into the family of candidate quadrature rules to construct hyperinterpolants efficiently. For example, a typical choice of quadrature rules for hyperinterpolation on $[-1,1]$ is the Gaussian quadrature, and now the Clenshaw--Curtis quadrature can also be considered a good choice; see more details in Section \ref{sec:3}.

Obviously, such relaxation is not cost-free. The original hyperinterpolant \eqref{equ:hyperinterpolation} is a projection for $f\in\mathbb{P}_n$, that is, $\mathcal{L}^{\text{S}}_nf=f$ for all $f\in\mathbb{P}_n$; see \cite[Lemma 4]{sloan1995polynomial}. However, due to the loss of some exactness degrees, this property is preserved only for polynomials of degree at most $k$, asserted by the following lemma.
\begin{lemma}\label{lem:polynomial}
If $f\in\mathbb{P}_k$, then $\mathcal{L}_n$ defined in Definition \ref{def} admits $\mathcal{L}_nf=f$.
\end{lemma}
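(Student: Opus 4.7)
The plan is to verify, directly from the definition \eqref{equ:newhyper}, that the discrete Fourier coefficients $\langle f,p_\ell\rangle_m$ coincide with the true $L^2$ Fourier coefficients $\langle f,p_\ell\rangle$ whenever $f\in\mathbb{P}_k$, and then appeal to the fact that the orthogonal projection onto $\mathbb{P}_n$ fixes every polynomial in $\mathbb{P}_k\subset\mathbb{P}_n$.

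Concretely, I would start by fixing $f\in\mathbb{P}_k$ and an orthonormal basis element $p_\ell\in\mathbb{P}_n$, so that the product $fp_\ell$ is a polynomial of degree at most $k+n$. Assumption \ref{assumption} guarantees that the quadrature rule \eqref{equ:quad} integrates every element of $\mathbb{P}_{n+k}$ exactly, hence
\begin{equation*}
\langle f,p_\ell\rangle_m \;=\; \sum_{j=1}^m w_j f(x_j)p_\ell(x_j) \;=\; \int_\Omega fp_\ell\,\mathrm{d}\omega \;=\; \langle f,p_\ell\rangle
\end{equation*}
for each $\ell=1,\ldots,d_n$. This is the step where the relaxed exactness degree $n+k$ is used in full, and it is the only place the hypothesis enters.

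Substituting these identities into \eqref{equ:newhyper} gives
\begin{equation*}
\mathcal{L}_n f \;=\; \sum_{\ell=1}^{d_n}\langle f,p_\ell\rangle\,p_\ell,
\end{equation*}
which is precisely the continuous $L^2$ orthogonal projection of $f$ onto $\mathbb{P}_n$. Since $k\leq n$ implies $\mathbb{P}_k\subset\mathbb{P}_n$, this projection leaves $f$ invariant, and the conclusion $\mathcal{L}_n f = f$ follows.

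There is no real obstacle here; the argument is essentially a bookkeeping of the degree of $fp_\ell$ against the exactness degree of the quadrature. The only mild subtlety is the implicit convention that $\{p_1,\ldots,p_{d_n}\}$ is an orthonormal basis of $\mathbb{P}_n$ (as stated in Definition \ref{def}), so that the partial sum over $\ell=1,\ldots,d_n$ indeed recovers the full projection onto $\mathbb{P}_n$ and no tail terms remain; this is automatic and needs no further comment.
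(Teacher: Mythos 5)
Your proof is correct and rests on the same key observation as the paper's: the exactness degree $n+k$ makes the discrete inner product of a $\mathbb{P}_k$ function with a $\mathbb{P}_n$ basis element exact, since the product lies in $\mathbb{P}_{n+k}$. The only cosmetic difference is that you keep $f$ whole and invoke the continuous projection identity, whereas the paper expands $f$ in the orthonormal basis and uses $\langle p_{\ell'},p_{\ell}\rangle_m=\delta_{\ell\ell'}$ for $\ell'\leq d_k$, $\ell\leq d_n$; the two computations are equivalent.
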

\begin{proof}
For $f\in\mathbb{P}_k$, it may be expressed as $f=\sum_{\ell=1}^{d_k}a_{\ell}p_{\ell}$, where $a_{\ell} = \int_{\Omega}fp_{\ell}\text{d}\omega$ and $d_k=\dim\mathbb{P}_k$. The exactness degree $n+k$ admits $\langle p_{\ell'},p_{\ell}\rangle_m =\delta_{\ell\ell'}$ for $1\leq \ell'\leq d_k$ and $1\leq \ell\leq d_n$. Thus,
$$\mathcal{L}_nf=\sum_{\ell=1}^{d_n}\left\langle \sum_{\ell'=1}^{d_k}a_{\ell'}p_{\ell'}, p_{\ell}\right\rangle_m p_{\ell}
=\sum_{\ell=1}^{d_n}\left(\sum_{\ell'=1}^{d_k}a_{\ell'} \left\langle p_{\ell'},p_{\ell}\right \rangle_m\right) p_{\ell}=\sum_{\ell=1}^{d_k}a_{\ell}p_{\ell},$$
leading to $\mathcal{L}_nf=f$.
\end{proof}

\begin{corollary}
For $f\in\mathcal{C}(\Omega)$, we have $\mathcal{L}_n(\mathcal{L}_kf)=\mathcal{L}_k(\mathcal{L}_nf)=\mathcal{L}_k(\mathcal{L}_kf)=\mathcal{L}_kf$.
\end{corollary}
\begin{proof}
As $\mathcal{L}_kf\in\mathbb{P}_k$, Lemma \ref{lem:polynomial} immediately implies $\mathcal{L}_n(\mathcal{L}_kf) = \mathcal{L}_kf$.

Similar to the proof of Lemma \ref{lem:polynomial}, we have
\begin{equation*}\begin{split}
\mathcal{L}_k(\mathcal{L}_nf)=&\sum_{\ell=1}^{d_k}\left\langle \sum_{\ell'=1}^{d_n} \langle f,p_{\ell'}\rangle_mp_{\ell'},p_{\ell}\right\rangle_mp_{\ell}
= \sum_{\ell=1}^{d_k}\left( \sum_{\ell'=1}^{d_n} \langle f,p_{\ell'}\rangle_m\langle p_{\ell'},p_{\ell}\rangle_m\right)p_{\ell}\\
&= \sum_{\ell=1}^{d_k}\langle f,p_{\ell}\rangle_mp_{\ell} = \mathcal{L}_kf,
\end{split}\end{equation*}
and similarly,
\begin{equation*}\begin{split}
\mathcal{L}_k(\mathcal{L}_kf)= \sum_{\ell=1}^{d_k}\left( \sum_{\ell'=1}^{d_k} \langle f,p_{\ell'}\rangle_m\langle p_{\ell'},p_{\ell}\rangle_m\right)p_{\ell}= \sum_{\ell=1}^{d_k}\langle f,p_{\ell}\rangle_mp_{\ell} = \mathcal{L}_kf.
\end{split}\end{equation*}
Thus, the corollary is completely proved.
\end{proof}

\begin{remark}
Lemma \ref{lem:polynomial} indicates that the exactness degree $2n$ can be relaxed at least to $n+1$; otherwise, the projection property $\mathcal{L}_nf=f$ for all $f\in\mathbb{P}_k$ does not maintain for any non-trivial polynomial spaces.
\end{remark}
\begin{remark}
There may be an illusion that for the exactness-relaxing hyperinterpolation \eqref{equ:newhyper}, there holds $\mathcal{L}_nf=f$ for $f\in\mathbb{P}_{\lfloor (n+k)/{2}\rfloor}$, induced from the fact that for $\mathcal{L}^{\rm{S}}_n$ with exactness degree $2n$, $\mathcal{L}^{\rm{S}}_nf=f$ for all $f\in\mathbb{P}_{n}$. However, according to the proof of Lemma \ref{lem:polynomial}, this is not true. Indeed, $\langle p_{\ell'},p_{\ell}\rangle_m$ with exactness degree $n+k$ may not be the Kronecker $\delta_{\ell\ell'}$ for $p_{\ell'}\in\mathbb{P}_{{\lfloor (n+k)/{2}\rfloor}}$ and $p_{\ell}\in\mathbb{P}_n$.
\end{remark}

This decay of projection-maintaining degrees is followed by Theorem \ref{thm} below, indicating that the convergence rate of $\mathcal{L}^{\text{S}}_n$ is slowed from $E_n(f)$ to $E_k(f)$. It was proved in \cite{sloan1995polynomial} that
\begin{equation}\label{equ:stabilityoriginal}
\|\mathcal{L}^{\text{S}}_nf\|_2\leq V^{1/2}\|f\|_{\infty}
\end{equation}
and
\begin{equation}\label{equ:errororiginal}
\|\mathcal{L}^{\text{S}}_nf-f\|_2\leq 2V^{1/2}E_n(f),
\end{equation}
where the appeared norms are defined as $\|g\|_2:=\left(\int_{\Omega}|g|^2\text{d}\omega\right)^{1/2}$ for $g\in L^2(\Omega)$ and $\|g\|_{\infty}:=\sup_{x\in\Omega}|g(x)|$ for $g\in\mathcal{C}(\Omega)$, and $E_n(g)$ denotes the best uniform error of $g$ by a polynomial in $\mathbb{P}_n$, that is,
$$E_n(g):=\inf_{\chi\in \mathbb{P}_n}\|g-\chi\|_{\infty}\quad \forall g\in\mathcal{C}(\Omega).$$

To tell the difference between the stability result \eqref{equ:stabilityoriginal} of $\mathcal{L}^{\text{S}}_n$ and that of $\mathcal{L}_n$, we note that the stability result \eqref{equ:stabilityoriginal} stems from
$$\|\mathcal{L}^{\text{S}}_nf\|_2^2+\langle f- \mathcal{L}^{\text{S}}_nf, f- \mathcal{L}^{\text{S}}_nf\rangle_{m'} = \langle f,f \rangle_{m'}=\sum_{j=1}^mw_jf(x_j)^2\leq V\|f\|_{\infty}^2 $$
and the non-negativeness of $\langle f- \mathcal{L}^{\text{S}}_nf, f- \mathcal{L}^{\text{S}}_nf\rangle_{m'}$, where $\langle\cdot,\cdot\rangle_{m'}$ denotes an $m$-point quadrature rule \eqref{equ:quad} with exactness degree $2n$ and this notation is only used here; see the proof in \cite{sloan1995polynomial}. However, due to the relaxation of exactness degrees, we can only claim
\begin{equation*}\begin{split}
\|\mathcal{L}_nf\|_2^2+\langle f- \mathcal{L}_nf, f- \mathcal{L}_nf\rangle_m + \sigma_{n,k,f}= \langle f,f \rangle_m,
\end{split}\end{equation*}
where
\begin{equation}\label{equ:sigma}
\sigma_{n,k,f}=\langle\mathcal{L}_nf-\mathcal{L}_kf,\mathcal{L}_nf-\mathcal{L}_kf\rangle-\langle\mathcal{L}_nf-\mathcal{L}_kf,\mathcal{L}_nf-\mathcal{L}_kf\rangle_m
\end{equation}
stands for the error in evaluating the integral of $(\mathcal{L}_nf-\mathcal{L}_kf)^2$ over $\Omega$ by the quadrature rule \eqref{equ:quad} with exactness degree $n+k$; see the equation \eqref{equ:equality} in our proof in the next section. Even though it is possible (and often occurs) that $\langle f- \mathcal{L}_nf, f- \mathcal{L}_nf\rangle_m + \sigma_{n,k,f}\geq 0$ if the quadrature rule \eqref{equ:quad} converges fast enough, we cannot make such a claim rigorously in general. Therefore, it is natural to endow the quadrature rule \eqref{equ:quad} with some convergence property.

We assume that there exists an $\eta\in[0,1)$ such that
\begin{equation}\label{equ:etaassumption}
\left|\sum_{j=1}^mw_j\chi(x_j)^2-\int_{\Omega}\chi^2\text{d}\omega\right|\leq \eta \int_{\Omega}\chi^2\text{d}\omega\quad \forall \chi\in\mathbb{P}_{n}.
\end{equation}
If $k=n$, i.e., the quadrature exactness is not relaxed, then $\eta =0$. This convergence property \eqref{equ:etaassumption} can be regarded as the Marcinkiewicz--Zygmund inequality \cite{filbir2011marcinkiewicz,Marcinkiewicz1937,mhaskar2001spherical} applied to polynomials of degree at most $2n$, and we refer to it as the \emph{Marcinkiewicz--Zygmund property} below.

\begin{theorem}\label{thm}
Given $f\in\mathcal{C}(\Omega)$, let $\mathcal{L}_nf\in\mathbb{P}_n$ be defined by \eqref{equ:newhyper}, where the $m$-point quadrature rule \eqref{equ:quad} not only fulfills Assumption \ref{assumption} with $0<k\leq n$ but also has the Marcinkiewicz--Zygmund property \eqref{equ:etaassumption} with $\eta\in[0,1)$. Then
\begin{equation}\label{equ:stability}
\|\mathcal{L}_nf\|_2\leq\frac{V^{1/2}}{\sqrt{1-\eta}}\|f\|_{\infty},
\end{equation}
and
\begin{equation}\label{equ:error}
\|\mathcal{L}_nf-f\|_2\leq \left(\frac{1}{\sqrt{1-\eta}}+1\right)V^{1/2}E_k(f).
\end{equation}
The hyperinterpolant $\mathcal{L}_nf$ may not converge to $f$ as $n\rightarrow\infty$ if $k$ is fixed. If $k$ is additionally positively correlated to $n$, then
\begin{equation}\label{equ:conv}
\|\mathcal{L}_nf-f\|_2\rightarrow 0\quad \text{as}\quad n\rightarrow \infty.
\end{equation}

\end{theorem}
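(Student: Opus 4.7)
The plan is to prove the theorem in three steps---stability, then error estimate, then convergence---chained in that order, with the stability bound doing most of the work via the Marcinkiewicz--Zygmund (MZ) property. The first step is to derive the identity the excerpt foreshadows,
\begin{equation*}
\|\mathcal{L}_n f\|_2^2 + \langle f-\mathcal{L}_n f,\, f-\mathcal{L}_n f\rangle_m + \sigma_{n,k,f} \;=\; \langle f,f\rangle_m.
\end{equation*}
The cost-free ingredient is that exactness $n+k$ implies $\langle p_\ell, p_{\ell'}\rangle_m = \delta_{\ell\ell'}$ whenever at least one of $\ell,\ell'$ lies in $\{1,\ldots,d_k\}$, since then $p_\ell p_{\ell'}\in\mathbb{P}_{n+k}$. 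Splitting $\mathcal{L}_n f = \mathcal{L}_k f + (\mathcal{L}_n f - \mathcal{L}_k f)$, this partial discrete orthogonality makes the cross term $\langle \mathcal{L}_k f, \mathcal{L}_n f - \mathcal{L}_k f\rangle_m$ vanish and gives $\|\mathcal{L}_k f\|_m^2 = \|\mathcal{L}_k f\|_2^2$ (as $(\mathcal{L}_k f)^2\in\mathbb{P}_{2k}$). Combining with the $L^2$ Pythagoras $\|\mathcal{L}_n f\|_2^2 = \|\mathcal{L}_k f\|_2^2 + \|\mathcal{L}_n f - \mathcal{L}_k f\|_2^2$ yields $\|\mathcal{L}_n f\|_2^2 - \|\mathcal{L}_n f\|_m^2 = \sigma_{n,k,f}$, and the displayed identity follows from expanding $\langle f,f\rangle_m$ around $\mathcal{L}_n f$.

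To extract \eqref{equ:stability} I would drop the nonnegative term $\langle f-\mathcal{L}_n f, f-\mathcal{L}_n f\rangle_m$, apply MZ to $\mathcal{L}_n f - \mathcal{L}_k f \in \mathbb{P}_n$ to obtain $-\sigma_{n,k,f} \le \eta\|\mathcal{L}_n f - \mathcal{L}_k f\|_2^2 \le \eta\|\mathcal{L}_n f\|_2^2$ (the last step again by $L^2$ Pythagoras), and bound $\langle f,f\rangle_m \le V\|f\|_\infty^2$ using exactness on the constant $1$ together with $w_j>0$. Rearranging produces $(1-\eta)\|\mathcal{L}_n f\|_2^2 \le V\|f\|_\infty^2$, which is \eqref{equ:stability}. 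For \eqref{equ:error} I would pick $\chi^*\in\mathbb{P}_k$ attaining $E_k(f)$, invoke Lemma \ref{lem:polynomial} so that $\mathcal{L}_n f - f = \mathcal{L}_n(f-\chi^*) - (f-\chi^*)$, apply \eqref{equ:stability} to $f-\chi^*$, and combine with $\|f-\chi^*\|_2 \le V^{1/2}\|f-\chi^*\|_\infty$ via the triangle inequality.

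For \eqref{equ:conv}, convergence is essentially Weierstrass' theorem: on the compact manifold $\Omega$, $E_k(f)\to 0$ as $k\to\infty$ for every $f\in\mathcal{C}(\Omega)$, so if $k$ grows with $n$ the right-hand side of \eqref{equ:error} tends to zero; conversely, if $k$ is fixed and $f\notin\mathbb{P}_k$, there is no reason for $E_k(f)$ to vanish, explaining the caveat. The main obstacle, in my view, is the derivation of the identity: Sloan's original argument leans on full discrete orthonormality of $\{p_\ell\}_{\ell=1}^{d_n}$, which is lost under relaxed exactness. The trick is to use $\mathcal{L}_k f$ as a surrogate that \emph{does} enjoy the discrete-projection property under exactness $n+k$, so that the ``orthogonality deficit'' is captured cleanly by $\sigma_{n,k,f}$ and then tamed by MZ.
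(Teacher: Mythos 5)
Your proposal is correct and follows essentially the same route as the paper: the same decomposition $\mathcal{L}_nf=\mathcal{L}_kf+(\mathcal{L}_nf-\mathcal{L}_kf)$, the same key identity involving $\sigma_{n,k,f}$ tamed by the Marcinkiewicz--Zygmund property, and the same stability-plus-$E_k(f)$ argument for the error and convergence. The only difference is cosmetic: you assemble the identity directly from the two Pythagoras expansions and $\langle f,\mathcal{L}_nf\rangle_m=\|\mathcal{L}_nf\|_2^2$, whereas the paper packages the same facts into Lemma 2.1(a)--(d) before summing them.
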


\begin{remark}
By ``$k$ is additionally positively correlated to $n$,'' we mean that $n\rightarrow \infty$ implies $k\rightarrow \infty$. This condition ensures the convergence result \eqref{equ:conv} as $n\rightarrow \infty$. The converse statement that $k\rightarrow \infty$ implies $n\rightarrow \infty$ automatically holds because $k\leq n$.
\end{remark}

\begin{remark}
If $k=n$, i.e., the degree of quadrature exactness is not relaxed, then the stability result \eqref{equ:stability}, the error estimate \eqref{equ:error}, and the convergence result \eqref{equ:conv} are the same as those for $\mathcal{L}^{\rm{S}}_n$ in \cite{sloan1995polynomial}. If $0<k<n$, then as a cost of the relaxation of exactness, the error estimation \eqref{equ:error} is now controlled by $E_k(f)$ rather than $E_n(f)$. Since $E_k(f) \geq E_n(f)$ if $k<n$, this estimation \eqref{equ:error} reveals an effect of relaxing the quadrature exactness. That is, we can use fewer quadrature points than the original hyperinterpolation, but the corresponding error estimation will be somewhat amplified. Moreover, if $k\leq 0$, i.e., the degree of quadrature exactness is relaxed to $n$ or even less, then no convergence information can be offered by Theorem \ref{thm}.
\end{remark}



An immediate application of Theorem \ref{thm} is to a generalization of the method of ``product integration'',  see discussions in \cite{sloan1995polynomial}. In this method, the integral over $\Omega$ of the form $\int_{\Omega}hf\text{d}\omega$, where $f$ is smooth and $h$ contains any singularities in the product integrand, is approximated by
\begin{equation}\label{equ:productintegration}
\int_{\Omega}hf\text{d}\omega\approx\int_{\Omega}h(\mathcal{L}_nf)\text{d}\omega = \sum_{\ell=1}^{d_n}\langle f,p_{\ell}\rangle_m\int_{\Omega} h p_{\ell}\text{d}\omega=\sum_{j=1}^m W_jf(x_j),
\end{equation}
where
\begin{equation}\label{equ:WJ}
W_j = w_j \sum_{\ell=1}^{d_n} p_{\ell}(x_j)\int_{\Omega} h p_{\ell}\text{d}\omega,\quad j = 1,2,\ldots,m.
\end{equation}
Applying the Cauchy--Schwarz inequality over $\Omega$ to $\int_{\Omega}h(\mathcal{L}_nf-f)\text{d}\omega$, Theorem \ref{thm} immediately implies the following result.
\begin{corollary}
Let $h$ be measurable on $\Omega$ with respect to ${\rm{d}}\omega$ and satisfy $\|h\|_2<\infty$, and let $\{W_j\}_{j=1}^m$ be given by \eqref{equ:WJ}. Under the conditions of Theorem \ref{thm}, the approximation error of $\int_{\Omega}hf{\rm{d}}\omega$ in terms of \eqref{equ:productintegration} is estimated by
$$\left|\sum_{j=1}^m W_jf(x_j)-\int_{\Omega}hf{\rm{d}}\omega\right| \leq \left(\frac{1}{\sqrt{1-\eta}}+1\right)\|h\|_2V^{1/2}E_k(f).$$
\end{corollary}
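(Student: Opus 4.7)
The plan is to reduce the estimate to a direct application of Theorem \ref{thm} via the Cauchy--Schwarz inequality on $L^2(\Omega)$; the proof is essentially a clean three-step chain.

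First, I would verify algebraically that the quadrature sum $\sum_{j=1}^m W_j f(x_j)$ equals $\int_\Omega h(\mathcal{L}_n f)\,\text{d}\omega$, as already indicated in the derivation of \eqref{equ:productintegration}. Substituting the definition of $W_j$ from \eqref{equ:WJ} and interchanging the order of summation gives
$$\sum_{j=1}^m W_j f(x_j) = \sum_{\ell=1}^{d_n}\left(\sum_{j=1}^m w_j f(x_j) p_\ell(x_j)\right)\int_\Omega h p_\ell\,\text{d}\omega = \sum_{\ell=1}^{d_n}\langle f,p_\ell\rangle_m \int_\Omega h p_\ell\,\text{d}\omega = \int_\Omega h(\mathcal{L}_n f)\,\text{d}\omega,$$
where the last equality uses the definition \eqref{equ:newhyper} of $\mathcal{L}_n f$ and the linearity of the integral. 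Subtracting $\int_\Omega h f\,\text{d}\omega$ from both sides yields the key identity
$$\sum_{j=1}^m W_j f(x_j)-\int_\Omega h f\,\text{d}\omega = \int_\Omega h\bigl(\mathcal{L}_n f - f\bigr)\,\text{d}\omega.$$

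Second, I would apply the Cauchy--Schwarz inequality on $L^2(\Omega)$ to the right-hand side to bound its absolute value by $\|h\|_2\,\|\mathcal{L}_n f - f\|_2$. The hypothesis $\|h\|_2<\infty$, together with the fact that $\mathcal{L}_n f - f$ is continuous on $\Omega$ and $\Omega$ has finite measure $V$, ensures that both factors are finite and the application is legitimate.

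Finally, I would invoke the error bound \eqref{equ:error} from Theorem \ref{thm}, namely $\|\mathcal{L}_n f - f\|_2 \leq \left(1/\sqrt{1-\eta}+1\right)V^{1/2}E_k(f)$, and combine it with the Cauchy--Schwarz estimate to obtain the desired bound. There is no genuine obstacle here: the corollary is essentially a one-line consequence of Theorem \ref{thm} once the identity relating $\sum_{j=1}^m W_j f(x_j)$ and $\int_\Omega h(\mathcal{L}_n f)\,\text{d}\omega$ is in hand. The only point demanding minor care is the measurability and $L^2$-integrability of $h$, which are exactly what the hypotheses of the corollary provide.
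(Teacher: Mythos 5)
Your proposal is correct and follows exactly the route the paper indicates: rewrite $\sum_j W_j f(x_j)-\int_\Omega hf\,\mathrm{d}\omega$ as $\int_\Omega h(\mathcal{L}_nf-f)\,\mathrm{d}\omega$ via \eqref{equ:productintegration}, apply the Cauchy--Schwarz inequality on $L^2(\Omega)$, and invoke the error bound \eqref{equ:error} of Theorem \ref{thm}. Note only that your argument (correctly) yields an inequality $\leq$, so the ``$=$'' in the corollary's displayed statement should be read as a typographical slip for $\leq$.
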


\begin{remark}
In the light of Theorem \ref{thm}, we expect that the required exactness degree in constructing other variants of hyperinterpolants, such as filtered hyperinterpolants \cite{sloan2012filtered} and Lasso hyperinterpolants \cite{an2021lasso}, can also be reduced, and corresponding theory can be developed.
\end{remark}

\section{Proof of Theorem \ref{thm}}\label{sec:proof}

\subsection{Preparation}
The hyperinterpolant $\mathcal{L}_nf$ can be decomposed into
\begin{equation}\label{equ:decomposition}
\mathcal{L}_nf:=\mathcal{L}_kf+(\mathcal{L}_n-\mathcal{L}_k)f,
\end{equation}
where $\mathcal{L}_n-\mathcal{L}_k: \mathcal{C}(\Omega)\rightarrow \mathbb{P}_n$ is a linear operator mapping $f\in\mathcal{C}(\Omega)$ to
$$(\mathcal{L}_n-\mathcal{L}_k)f:=\sum_{\ell=d_k+1}^{d_n}\langle f,p_{\ell}\rangle_m p_{\ell} \in\mathbb{P}_n.$$
In the following proof of Theorem \ref{thm}, we shall treat $\mathcal{L}_kf$ and $(\mathcal{L}_n-\mathcal{L}_k)f$ separately. For the former component, the degree $n+k\geq 2k$ of quadrature exactness leads to
\begin{equation}\label{equ:observation1}
\langle \mathcal{L}_kf,\mathcal{L}_kf\rangle=\langle \mathcal{L}_kf,\mathcal{L}_kf\rangle_m.\
\end{equation}
For the latter component, the orthogonality of $\{p_{\ell}\}$ renders
\begin{equation}\label{equ:observation2}
\langle (\mathcal{L}_n-\mathcal{L}_k)f,(\mathcal{L}_n-\mathcal{L}_k)f\rangle=\sum_{\ell=d_k+1}^{d_n}\langle f,p_{\ell}\rangle_m^2=\langle f,(\mathcal{L}_n-\mathcal{L}_k)f\rangle_m.
\end{equation}

Before proving Theorem \ref{thm}, we present a lemma involving $\langle \mathcal{L}_kf,\mathcal{L}_kf\rangle_m$ and $\langle f,(\mathcal{L}_n-\mathcal{L}_k)f\rangle_m$.
\begin{lemma}\label{lem}
Adopt the conditions of Theorem \ref{thm}. Let $\mathcal{L}_k:\mathcal{C}(\Omega)\rightarrow \mathbb{P}_k$ be the hyperinterpolation operator of degree $k$, defined with an $m$-point quadrature with exactness degree $n+k$. Then

{\rm{(a)}} $\langle f-\mathcal{L}_kf,\chi\rangle_m = 0$ and $\langle f-\mathcal{L}_nf,\chi\rangle_m = 0$ for all $\chi\in\mathbb{P}_k$,

{\rm{(b)}} $\langle \mathcal{L}_kf,\mathcal{L}_kf\rangle_m + \langle f-\mathcal{L}_kf,f-\mathcal{L}_kf\rangle_m = \langle f,f\rangle_m $,

{\rm{(c)}} $\langle \mathcal{L}_kf,\mathcal{L}_kf\rangle_m + \langle \mathcal{L}_nf-\mathcal{L}_kf,\mathcal{L}_nf-\mathcal{L}_kf\rangle_m = \langle \mathcal{L}_nf,\mathcal{L}_nf\rangle_m $,

{\rm{(d)}} $\langle f-\mathcal{L}_nf,f-\mathcal{L}_nf\rangle_m + 2\langle f,\mathcal{L}_nf-\mathcal{L}_kf  \rangle_m = \langle f-\mathcal{L}_kf,f-\mathcal{L}_kf\rangle_m+ \langle \mathcal{L}_nf-\mathcal{L}_kf,\mathcal{L}_nf-\mathcal{L}_kf\rangle_m$.

\end{lemma}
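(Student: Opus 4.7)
\textbf{The plan} is to treat (a) as the single place where the quadrature exactness hypothesis enters, and then derive (b)--(d) as purely algebraic consequences.

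For (a), I would test against a basis element $\chi=p_{\ell'}\in\mathbb{P}_k$ (the general case follows by linearity, expanding $\chi$ in the first $d_k$ orthonormal basis elements). Substituting $\mathcal{L}_kf=\sum_{\ell=1}^{d_k}\langle f,p_\ell\rangle_m p_\ell$, the cancellation boils down to the fact that $p_\ell p_{\ell'}\in\mathbb{P}_{2k}\subset\mathbb{P}_{n+k}$, so exactness lets me replace $\langle p_\ell,p_{\ell'}\rangle_m$ by the orthonormality relation $\delta_{\ell\ell'}$; the sum collapses and kills $\langle f,p_{\ell'}\rangle_m$. For the $\mathcal{L}_n$ statement the same telescoping works, but now $p_\ell p_{\ell'}\in\mathbb{P}_{n+k}$ for $p_\ell\in\mathbb{P}_n$, $p_{\ell'}\in\mathbb{P}_k$, so this case uses the full strength of the hypothesis.

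Parts (b) and (c) are then Pythagorean-type identities: split $f=\mathcal{L}_kf+(f-\mathcal{L}_kf)$ for (b), expand $\langle f,f\rangle_m$ bilinearly, and invoke (a) with $\chi=\mathcal{L}_kf\in\mathbb{P}_k$ to kill the cross term. For (c), split $\mathcal{L}_nf=\mathcal{L}_kf+(\mathcal{L}_nf-\mathcal{L}_kf)$ and expand $\langle \mathcal{L}_nf,\mathcal{L}_nf\rangle_m$; the relevant cross-term identity $\langle \mathcal{L}_kf,\mathcal{L}_nf-\mathcal{L}_kf\rangle_m=0$ is obtained by subtracting the two statements in (a) to get $\langle \mathcal{L}_nf-\mathcal{L}_kf,\chi\rangle_m=0$ for all $\chi\in\mathbb{P}_k$, and then choosing $\chi=\mathcal{L}_kf$.

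For (d), my approach is to compute both sides separately and match. Adding (b) and (c) rewrites the right-hand side of (d) as $\langle f,f\rangle_m+\langle \mathcal{L}_nf,\mathcal{L}_nf\rangle_m-2\langle \mathcal{L}_kf,\mathcal{L}_kf\rangle_m$. Expanding the left-hand side of (d) by brute force yields
\[
\langle f-\mathcal{L}_nf,f-\mathcal{L}_nf\rangle_m+2\langle f,\mathcal{L}_nf-\mathcal{L}_kf\rangle_m=\langle f,f\rangle_m+\langle \mathcal{L}_nf,\mathcal{L}_nf\rangle_m-2\langle f,\mathcal{L}_kf\rangle_m,
\]
after the $-2\langle f,\mathcal{L}_nf\rangle_m$ and $+2\langle f,\mathcal{L}_nf\rangle_m$ contributions cancel. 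Matching the two forms reduces the identity to $\langle f,\mathcal{L}_kf\rangle_m=\langle \mathcal{L}_kf,\mathcal{L}_kf\rangle_m$, which is again (a) with $\chi=\mathcal{L}_kf\in\mathbb{P}_k$. The main obstacle is nothing conceptual; it is merely the risk of miscounting cross terms in (d), and routing the argument through (b)$+$(c) plus one last appeal to (a) keeps the bookkeeping transparent.
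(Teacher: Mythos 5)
Your proposal is correct and follows essentially the same route as the paper: part (a) is proved by the identical basis-expansion argument (exactness of degree $n+k$ turning $\langle p_{\ell'},p_{\ell}\rangle_m$ into $\delta_{\ell\ell'}$, with the first identity only needing degree $2k\leq n+k$ and the second the full $n+k$), and (b), (c) are deduced from the same vanishing cross terms $\langle f-\mathcal{L}_kf,\mathcal{L}_kf\rangle_m=0$ and $\langle \mathcal{L}_nf-\mathcal{L}_kf,\mathcal{L}_kf\rangle_m=0$. The only divergence is in (d): the paper substitutes $g=f-\mathcal{L}_kf$ into the expansion $\langle g-\mathcal{L}_ng,g-\mathcal{L}_ng\rangle_m=\langle g,g\rangle_m-2\langle g,\mathcal{L}_ng\rangle_m+\langle \mathcal{L}_ng,\mathcal{L}_ng\rangle_m$, which requires the projection property $\mathcal{L}_n(\mathcal{L}_kf)=\mathcal{L}_kf$ from Lemma \ref{lem:polynomial}, whereas you expand both sides directly (via (b)$+$(c) on the right) and reduce the identity to $\langle f,\mathcal{L}_kf\rangle_m=\langle \mathcal{L}_kf,\mathcal{L}_kf\rangle_m$, i.e.\ one more appeal to (a). Your bookkeeping checks out, and this variant has the mild advantage of not invoking Lemma \ref{lem:polynomial} at all, at the cost of a slightly longer term-by-term expansion; either argument is perfectly adequate.
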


\begin{proof}
(a) Note that any $\chi\in\mathbb{P}_k$ can be expressed as $\chi=\sum_{\ell=1}^{d_k}a_{\ell}p_{\ell}$, where $a_{\ell}=\int_{\Omega} \chi p_{\ell} \text{d}\omega$. The first equation holds since
\begin{equation*}\begin{split}
\langle f-\mathcal{L}_kf,\chi\rangle_m &= \sum_{\ell=1}^{d_k}a_{\ell}\left\langle f-\sum_{\ell'=1}^{d_k}\langle f,p_{\ell'}\rangle_m p_{\ell'},p_{\ell}\right\rangle_m \\
&= \sum_{\ell=1}^{d_k}a_{\ell}\left(\langle f,p_{\ell}\rangle_m -\sum_{\ell'=1}^{d_k}\langle f,p_{\ell'}\rangle_m \langle p_{\ell'},p_{\ell}\rangle_m\right)=0.
\end{split}\end{equation*}
Similarly,
\begin{equation*}\begin{split}
\langle f-\mathcal{L}_nf,\chi\rangle_m = \sum_{\ell=1}^{d_k}a_{\ell}\left(\langle f,p_{\ell}\rangle_m -\sum_{\ell'=1}^{d_n}\langle f,p_{\ell'}\rangle_m \langle p_{\ell'},p_{\ell}\rangle_m\right)=0.
\end{split}\end{equation*}

(b) Letting $\chi = \mathcal{L}_kf$, the first equation in statement (a) implies $\langle \mathcal{L}_kf,\mathcal{L}_kf\rangle_m = \langle f,\mathcal{L}_kf\rangle_m$. Thus
\begin{equation*}\begin{split}
\langle \mathcal{L}_kf,\mathcal{L}_kf\rangle_m + \langle f-\mathcal{L}_kf,f-\mathcal{L}_kf\rangle_m  &= 2\langle \mathcal{L}_kf,\mathcal{L}_kf\rangle_m
-2 \langle f,\mathcal{L}_kf\rangle_m +  \langle f,f\rangle_m \\
&= \langle f,f\rangle_m.
\end{split}
\end{equation*}

(c) Letting $\chi = \mathcal{L}_kf$ in both equations in statement (a), we have $\langle \mathcal{L}_kf,\mathcal{L}_kf\rangle_m = \langle f,\mathcal{L}_kf\rangle_m = \langle \mathcal{L}_nf,\mathcal{L}_kf\rangle_m $. Thus
\begin{equation*}\begin{split}
&\langle \mathcal{L}_kf,\mathcal{L}_kf\rangle_m + \langle \mathcal{L}_nf-\mathcal{L}_kf,\mathcal{L}_nf-\mathcal{L}_kf\rangle_m \\
 =& 2\langle \mathcal{L}_kf,\mathcal{L}_kf\rangle_m-2 \langle \mathcal{L}_nf,\mathcal{L}_kf\rangle_m +  \langle \mathcal{L}_nf,\mathcal{L}_nf\rangle_m \\
= &\langle \mathcal{L}_nf,\mathcal{L}_nf\rangle_m.
\end{split}
\end{equation*}

(d) It is immediate that
\begin{equation}\label{equ:g}
\langle g-\mathcal{L}_ng,g-\mathcal{L}_ng\rangle_m = \langle g,g\rangle_m -2\langle g,\mathcal{L}_ng\rangle_m+\langle \mathcal{L}_ng,\mathcal{L}_ng\rangle_m
\end{equation}
holds for any $g\in\mathcal{C}(\Omega)$. Lemma \ref{lem:polynomial} implies $\mathcal{L}_n(\mathcal{L}_kf)=\mathcal{L}_kf$. Then replacing $g$ by $f-\mathcal{L}_kf$, the left-hand side of \eqref{equ:g} becomes
\begin{equation*}\begin{split}
&\langle f-\mathcal{L}_kf-\mathcal{L}_n(f-\mathcal{L}_kf),f-\mathcal{L}_kf-\mathcal{L}_n(f-\mathcal{L}_kf)\rangle_m\\
=&\langle f-\mathcal{L}_kf -\mathcal{L}_nf+\mathcal{L}_kf,f-\mathcal{L}_kf-\mathcal{L}_nf+\mathcal{L}_kf\rangle_m\\
=& \langle f-\mathcal{L}_nf,f-\mathcal{L}_nf\rangle_m,
\end{split}\end{equation*}
and three terms on the right-hand side becomes $\langle g,g\rangle_m  = \langle f-\mathcal{L}_kf,f-\mathcal{L}_kf\rangle_m$,
\begin{align}
-2\langle g,\mathcal{L}_ng\rangle_m =& -2\langle f-\mathcal{L}_kf,\mathcal{L}_n(f-\mathcal{L}_kf)\rangle_m \notag\\
 =& -2\langle f-\mathcal{L}_kf,\mathcal{L}_nf-\mathcal{L}_kf\rangle_m \notag\\
=&-2\langle f,\mathcal{L}_nf-\mathcal{L}_kf\rangle_m,\label{equ:term2}
\end{align}
and
\begin{equation*}\begin{split}
\langle \mathcal{L}_ng,\mathcal{L}_ng\rangle_m
=& \langle \mathcal{L}_n(f-\mathcal{L}_kf),\mathcal{L}_n(f-\mathcal{L}_kf)\rangle_m \\
=&\langle \mathcal{L}_nf-\mathcal{L}_kf,\mathcal{L}_nf-\mathcal{L}_kf\rangle_m,
\end{split}\end{equation*}
where \eqref{equ:term2} holds since the orthogonality of $\{p_{\ell}\}_{\ell=1}^{d_n}$ and the quadrature exactness degree $n+k$ imply $\langle p_{\ell},p_{\ell'}\rangle_m = 0$ for $\ell = 1,2,\ldots,d_k$ and $\ell'= d_k+1,\ldots,d_n$, and then
\begin{equation}\label{equ:proofuse}
\langle \mathcal{L}_kf,(\mathcal{L}_n-\mathcal{L}_k)f\rangle_m=\left\langle \sum_{\ell=1}^{d_k}\langle f,p_{\ell}\rangle_m p_{\ell},\sum_{\ell'=d_k+1}^{d_n} \langle f,p_{\ell'}\rangle_m p_{\ell'}\right\rangle_m=0.
\end{equation}
Hence, the equality \eqref{equ:g} suggests the proof of statement (d).
\end{proof}

\subsection{Proof of Theorem \ref{thm}}
Now we are prepared to prove Theorem \ref{thm}.

\noindent \emph{Proof of Theorem \ref{thm}.} According to the decomposition \eqref{equ:decomposition}, we have
\begin{equation*}\begin{split}
\|\mathcal{L}_nf\|_2^2 = &\langle \mathcal{L}_nf,\mathcal{L}_nf\rangle =\langle \mathcal{L}_kf+(\mathcal{L}_n-\mathcal{L}_k)f,\mathcal{L}_kf+(\mathcal{L}_n-\mathcal{L}_k)f\rangle \\
=&\langle \mathcal{L}_kf,\mathcal{L}_kf\rangle + \langle (\mathcal{L}_n-\mathcal{L}_k)f,(\mathcal{L}_n-\mathcal{L}_k)f\rangle,
\end{split}\end{equation*}
where the last step holds since $\langle \mathcal{L}_kf,(\mathcal{L}_n-\mathcal{L}_k)f\rangle=0$, which can be proved similarly to \eqref{equ:proofuse} and using the fact that $\langle p_{\ell},p_{\ell'}\rangle = 0$ for $\ell = 1,2,\ldots,d_k$ and $\ell'= d_k+1,\ldots,d_n$. The observations \eqref{equ:observation1} and \eqref{equ:observation2} then lead to
\begin{equation*}\begin{split}
\|\mathcal{L}_nf\|_2^2 = \langle \mathcal{L}_kf,\mathcal{L}_kf\rangle_m + \langle f,(\mathcal{L}_n-\mathcal{L}_k)f\rangle_m.
\end{split}\end{equation*}

To derive the stability result \eqref{equ:stability}, summing up the equations in Lemma \ref{lem}(b,c,d), after easy computations, we have
\begin{equation}\label{equ:summingup}\begin{split}
&2\langle \mathcal{L}_kf,\mathcal{L}_kf\rangle_m+2 \langle f,(\mathcal{L}_n-\mathcal{L}_k)f\rangle_m
+ \langle f-\mathcal{L}_nf,f-\mathcal{L}_nf\rangle_m\\
=&\langle f,f\rangle_m + \langle \mathcal{L}_nf,\mathcal{L}_nf\rangle_m .
\end{split}\end{equation}
Recalling the expression \eqref{equ:sigma} of
$$\sigma_{n,k,f}=\langle\mathcal{L}_nf-\mathcal{L}_kf,\mathcal{L}_nf-\mathcal{L}_kf\rangle-\langle\mathcal{L}_nf-\mathcal{L}_kf,\mathcal{L}_nf-\mathcal{L}_kf\rangle_m$$
and the observation \eqref{equ:observation2}, we have
\begin{equation*}\begin{split}
\langle f,(\mathcal{L}_n-\mathcal{L}_k)f\rangle_m =& \langle\mathcal{L}_nf-\mathcal{L}_kf,\mathcal{L}_nf-\mathcal{L}_kf\rangle\\
 = &\langle\mathcal{L}_nf-\mathcal{L}_kf,\mathcal{L}_nf-\mathcal{L}_kf\rangle_m+\sigma_{n,k,f}.
\end{split}\end{equation*}
Together with statement (c) of Lemma \ref{lem}, we have
\begin{equation*}\begin{split}
\langle \mathcal{L}_nf,\mathcal{L}_nf\rangle_m  &= \langle \mathcal{L}_kf,\mathcal{L}_kf\rangle_m+\langle\mathcal{L}_nf-\mathcal{L}_kf,\mathcal{L}_nf-\mathcal{L}_kf\rangle_m \\
&= \langle \mathcal{L}_kf,\mathcal{L}_kf\rangle_m + \langle f,(\mathcal{L}_n-\mathcal{L}_k)f\rangle_m - \sigma_{n,k,f}.
\end{split}\end{equation*}
Thus, replacing a sum of $\langle \mathcal{L}_kf,\mathcal{L}_kf\rangle_m + \langle f,(\mathcal{L}_n-\mathcal{L}_k)f\rangle_m$ on the left-hand side of \eqref{equ:summingup} with $\langle \mathcal{L}_nf,\mathcal{L}_nf\rangle_m + \sigma_{n,k,f}$ gives
\begin{equation}\label{equ:equality}
\langle \mathcal{L}_kf,\mathcal{L}_kf\rangle_m+ \langle f,(\mathcal{L}_n-\mathcal{L}_k)f\rangle_m + \sigma_{n,k,f} + \langle f-\mathcal{L}_nf,f-\mathcal{L}_nf\rangle_m
=\langle f,f\rangle_m.
\end{equation}
As $\sigma_{n,k,f}$ stands for the error in evaluating the integral of $(\mathcal{L}_nf-\mathcal{L}_kf)^2$ over $\Omega$ by the quadrature rule \eqref{equ:quad} with exactness degree $n+k$, the Marcinkiewicz--Zygmund property \eqref{equ:etaassumption} implies
$$|\sigma_{n,k,f}|\leq \eta\langle\mathcal{L}_nf-\mathcal{L}_kf,\mathcal{L}_nf-\mathcal{L}_kf\rangle =\eta \langle f,(\mathcal{L}_n-\mathcal{L}_k)f\rangle_m.$$
Thus,together with the non-negativeness of $\langle f-\mathcal{L}_nf,f-\mathcal{L}_nf\rangle_m$, the expression \eqref{equ:equality} leads to
$$\langle \mathcal{L}_kf,\mathcal{L}_kf\rangle_m + (1-\eta) \langle f,(\mathcal{L}_n-\mathcal{L}_k)f\rangle_m \leq\langle f,f\rangle_m, $$
that is,
$$\langle f,(\mathcal{L}_n-\mathcal{L}_k)f\rangle_m \leq \frac{1}{1-\eta}\left(\langle f,f\rangle_m-\langle \mathcal{L}_kf,\mathcal{L}_kf\rangle_m\right).$$
Hence, we have
\begin{equation*}
\begin{split}
\|\mathcal{L}_nf\|_2^2 =& \langle \mathcal{L}_kf,\mathcal{L}_kf\rangle_m + \langle f,(\mathcal{L}_n-\mathcal{L}_k)f\rangle_m\\
\leq& \frac{1}{1-\eta}\langle f,f\rangle_m-\frac{\eta}{1-\eta}  \langle \mathcal{L}_kf,\mathcal{L}_kf\rangle_m\\
\leq& \frac{1}{1-\eta}\langle f,f\rangle_m,
\end{split}\end{equation*}
and the stability result \eqref{equ:stability} follows from $$\langle f,f\rangle_m=\sum_{j=1}^mw_jf(x_j)^2\leq \sum_{j=1}^mw_j\|f\|_{\infty}^2=V\|f\|_{\infty}^2.$$

The error bound \eqref{equ:error} can be derived from a standard argument. For any $\chi\in\mathbb{P}_k$, with the aid of Lemma \ref{lem:polynomial}, there holds $\mathcal{L}_nf-f =\mathcal{L}_n(f-\chi)-(f-\chi)$. Using the stability result \eqref{equ:stability}, we have
\begin{equation*}\begin{split}
\|\mathcal{L}_nf-f\|_2
& = \|\mathcal{L}_n(f-\chi)-(f-\chi)\|_2\\
& \leq  \|\mathcal{L}_n(f-\chi)\|_2+\|f-\chi\|_2\\
&\leq \frac{V^{1/2}}{\sqrt{1-\eta}}\|f-\chi\|_{\infty} +V^{1/2}\|f-\chi\|_{\infty}\\
&=\left(\frac{1}{\sqrt{1-\eta}}+1\right) V^{1/2} \|f-\chi\|_{\infty}.
\end{split}\end{equation*}
This estimate implies, as it holds for all $\chi\in\mathbb{P}_k$, that
\begin{equation*}\begin{split}
\|\mathcal{L}_nf-f\|_2\leq &\left(\frac{1}{\sqrt{1-\eta}}+1\right) V^{1/2}\inf_{\chi\in\mathbb{P}_k}\|f-\chi\|_{\infty}\\
=&\left(\frac{1}{\sqrt{1-\eta}}+1\right)V^{1/2}E_k(f).
\end{split}\end{equation*}
If $k$ is fixed, then $E_k(f)$ is fixed, suggesting that no convergence result of $\mathcal{L}_nf$ as $n\rightarrow\infty$ can be concluded. On the other hand, if $k$ is positively correlated to $n$, then $E_k(f)\rightarrow0$ and hence $\|\mathcal{L}_nf-f\|_2\rightarrow 0$ as $n\rightarrow \infty$. \hfill $\square$

\section{Numerical examples}\label{sec:3}

We now apply Theorem \ref{thm} to two regions: the interval $[-1,1]\subset\mathbb{R}$ and the $2$-sphere $\mathbb{S}^2\subset\mathbb{R}^3$. For the simplicity of the narrative, we assume that the following mentioned quadrature rules have the Marcinkiewicz--Zygmund property \eqref{equ:etaassumption} with $\eta= 3/4$, a quite loose assumption for $\eta\in[0,1)$. All codes were written by MATLAB R2022a, and all numerical experiments were conducted on a laptop (16 GB RAM, Intel® CoreTM i7-9750H Processor) with macOS Monterey 12.4. The codes are available at https://github.com/HaoNingWu/MZHyper.

\subsection{The interval}
Let $\Omega  = [ -1,1]$ with $\text{d}\omega  = \omega (x)\text{d}x$, where $\omega(x) \geq  0$
is a weight function on $[- 1,1]$ and different $\omega (x)$ leads to different value of $V =
\int_{-1}^1 \omega (x)\text{d}x$. The space $\mathbb{P}_n$ is a linear space of polynomials of degree at most $n$ on $[-1,1]$, hence $d_n = n+1$.

In the following example, we consider $\omega(x) = 1$ (thus $V=2$), and quadrature rules with such weight function include the Gauss--Legendre quadrature and the Clenshaw--Curtis quadrature. We refer the reader to \cite{trefethen2022exactness} for background information about quadrature rules on $[-1,1]$. The Gauss--Legendre quadrature rule is a typical choice of quadrature rules for the original hyperinterpolation $\mathcal{L}^{\text{S}}_n$, as an $m$-point Gauss--Legendre quadrature has exactness degree $2m-1$.  For effective testing of Gaussian quadrature rules, we refer the reader to \cite{gautschi1983and}. Thus, an $(n+1)$-point Gauss--Legendre quadrature can fulfill the exactness requirement $2n$ of $\mathcal{L}^{\text{S}}_n$. Meanwhile, the Clenshaw--Curtis quadrature \cite{Clenshaw1960} in the Chebyshev points, which has exactness degree $m-1$ if $m$ quadrature points are adopted, is not considered practical in constructing the original hyperinterpolants. Indeed, one needs a $(2n+1)$-point Clenshaw--Curtis quadrature to construct an original hyperinterpolant $\mathcal{L}^{\text{S}}_nf$. However, in the light of Theorem \ref{thm}, we have the following corollary.

\begin{corollary}
Let $\langle\cdot,\cdot\rangle_m$ used in Definition \ref{def} be an $m$-point Gauss-Legendre quadrature with $(n+2)/{2}\leq m\leq (2n+1)/{2}$, or an $m$-point Clenshaw--Curtis quadrature with $n+2\leq m\leq 2n+1$. Under the conditions of Theorem \ref{thm} with $\eta = 3/4$, the exactness-relaxing hyperinterpolant $\mathcal{L}_nf$ satisfies
\begin{equation*}
\|\mathcal{L}_nf-f\|_2\leq
\begin{cases}
3V^{1/2}E_{2m-1-n}(f)&\text{when using the Gauss--Legendre quadrature},\\
3V^{1/2}E_{m-1-n}(f)&\text{when using the Clenshaw--Curtis quadrature}.
\end{cases}
\end{equation*}
\end{corollary}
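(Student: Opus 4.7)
The plan is to verify that each quadrature rule satisfies the hypotheses of Theorem \ref{thm} for an appropriate value of $k$, and then to plug in the numerical constants. This is essentially a bookkeeping argument; the analytic content is already contained in Theorem \ref{thm}, since the Marcinkiewicz--Zygmund property with $\eta=3/4$ is assumed at the start of the section.

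First I would recall that an $m$-point Gauss quadrature on $[-1,1]$ has exactness degree $2m-1$, and an $m$-point Clenshaw--Curtis quadrature has exactness degree $m-1$. To apply Theorem \ref{thm}, I need to pick $k$ so that the exactness degree equals $n+k$. Solving gives $k=2m-1-n$ in the Gauss case and $k=m-1-n$ in the Clenshaw--Curtis case. The next step is to check that the ranges of $m$ listed in the statement guarantee $0<k\leq n$: for Gauss, $\frac{n+2}{2}\leq m\leq \frac{2n+1}{2}$ translates precisely to $1\leq 2m-1-n\leq n$, and for Clenshaw--Curtis, $n+2\leq m\leq 2n+1$ translates precisely to $1\leq m-1-n\leq n$. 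Both ranges thus fall within the hypotheses of Assumption \ref{assumption}.

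With Assumption \ref{assumption} verified and the Marcinkiewicz--Zygmund property \eqref{equ:etaassumption} granted with $\eta=3/4$, the estimate \eqref{equ:error} of Theorem \ref{thm} applies. The prefactor reduces to
\begin{equation*}
\left(\frac{1}{\sqrt{1-\eta}}+1\right)V^{1/2}=\left(\frac{1}{\sqrt{1/4}}+1\right)V^{1/2}=3V^{1/2},
\end{equation*}
and the $E_k(f)$ on the right-hand side becomes $E_{2m-1-n}(f)$ for Gauss and $E_{m-1-n}(f)$ for Clenshaw--Curtis, yielding exactly the two bounds in \eqref{equ:interval}.

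There is no real obstacle here beyond matching parameters. The only subtlety worth flagging in the write-up is that the endpoint conditions on $m$ are what keep $k$ in the admissible range $(0,n]$ so that Lemma \ref{lem:polynomial} and the machinery of Theorem \ref{thm} apply; in particular, the lower endpoint $m=\lceil (n+2)/2\rceil$ (Gauss) and $m=n+2$ (Clenshaw--Curtis) are precisely the smallest integers for which $k\geq 1$, reflecting the observation in Remark following Lemma \ref{lem:polynomial} that the relaxation cannot go below exactness degree $n+1$ without losing the projection property on a non-trivial polynomial subspace.
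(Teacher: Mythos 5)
Your proposal is correct and matches the paper's intent exactly: the corollary is stated as an immediate consequence of Theorem \ref{thm}, obtained by identifying $k=2m-1-n$ (Gauss) or $k=m-1-n$ (Clenshaw--Curtis), checking the stated ranges of $m$ give $0<k\leq n$, and evaluating the constant $\bigl(1/\sqrt{1-\eta}+1\bigr)=3$ at $\eta=3/4$. No gaps.
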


It is worth noting that the $m$-point Newton--Cotes quadrature in the equispaced points with $n+2\leq m\leq 2n+1$, though having exactness degree exceeding $n+1$, fails to fulfill the assumption of positive weights, as the Newton--Cotes weights have alternating signs. However, this does not suggest the impossibility of constructing hyperinterpolants in the equispaced points. Quadrature rules with exactness $n+k$ in the equispaced points, even in the scattered points, can be designed in the spirit of optimal recovery rather than the exactness principle. As suggested in \cite{devore2019computing}, given $m$ distinct points $\{x_j\}_{j=1}^m$, one can design a quadrature with exactness degree $n+k$ by obtaining its quadrature weights $\{w_j\}_{j=1}^m$ from solving
\begin{equation}\label{equ:optimalrecovery}
\min\limits_{w_1,w_2,\ldots,w_m}\sum_{j=1}^m|w_j| \quad\text{s.t.}\quad\sum_{j=1}^mw_j v(x_j)=\int_{-1}^1v\quad\forall v\in\mathbb{P}_{n+k}.
\end{equation}
In general, the number $m$ of quadrature points in the rule \eqref{equ:optimalrecovery} should be much larger than the exactness-oriented quadrature rules to achieve the exactness degree $n+k$. For example, to design an $m$-equispaced-point quadrature with exactness degree $n+k$ in the spirit of \eqref{equ:optimalrecovery}, $m$, $n$, and $k$ shall satisfy $n+k=\mathcal{O}(\sqrt{m\ln{m}})$, see \cite[Theorem 3.6]{devore2019computing}. Thus, we have the following result.

\begin{corollary}
Let $\langle\cdot,\cdot\rangle_m$ used in Definition \ref{def} be an $m$-point quadrature designed by \eqref{equ:optimalrecovery}, where the quadrature points are equispaced points on $[-1,1]$, and the weights should be positive. Under the conditions of Theorem \ref{thm} with $\eta=3/4$, the error of the exactness-relaxing hyperinterpolant $\mathcal{L}_nf$ is controlled by $\|\mathcal{L}_nf-f\|_2\leq 3V^{1/2}E_{k}(f)$.
\end{corollary}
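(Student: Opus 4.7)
The plan is to treat this corollary as essentially a direct specialization of Theorem \ref{thm} to the equispaced setting on $[-1,1]$, in which all the structural hypotheses have been built into the design of the quadrature rule \eqref{equ:optimalrecovery}. First I would verify that the $m$-point rule given by \eqref{equ:optimalrecovery} satisfies Assumption \ref{assumption}: by construction, its linear constraints demand exactness on $\mathbb{P}_{n+k}$, and the corollary's hypothesis restricts $k$ to the admissible range $0<k\leq n$. Positivity of the weights is assumed outright in the statement, so Assumption \ref{assumption} is in force.

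Next I would invoke the Marcinkiewicz--Zygmund property \eqref{equ:etaassumption} with $\eta=3/4$, which is the standing simplifying hypothesis adopted at the beginning of Section \ref{sec:3}. With Assumption \ref{assumption} and \eqref{equ:etaassumption} both satisfied, Theorem \ref{thm} applies to the hyperinterpolant $\mathcal{L}_nf$ defined from this quadrature rule and yields
\begin{equation*}
\|\mathcal{L}_nf-f\|_2\leq \left(\frac{1}{\sqrt{1-\eta}}+1\right)V^{1/2}E_k(f).
\end{equation*}
Substituting $\eta=3/4$ gives $1/\sqrt{1-\eta}=2$, so the prefactor collapses to $3$, producing the claimed bound $\|\mathcal{L}_nf-f\|_2\leq 3V^{1/2}E_k(f)$.

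There is no serious obstacle in this proof; it is a one-line application of Theorem \ref{thm} once the quadrature's exactness, positivity, and Marcinkiewicz--Zygmund property are acknowledged. The only point that deserves a brief remark is that \eqref{equ:optimalrecovery} is an $\ell_1$-minimization problem whose optimal weights are not automatically positive. The corollary circumvents this by explicitly requiring positive weights as a side condition, which is consistent with DeVore--Foucart--Petrova--Wojtaszczyk's setting in \cite{devore2019computing} and ensures that the non-negativity step $\langle f,f\rangle_m=\sum_j w_j f(x_j)^2\leq V\|f\|_\infty^2$ used in the proof of Theorem \ref{thm} carries through. I would therefore close the proof with exactly this one-sentence observation, followed by the explicit substitution of $\eta=3/4$.
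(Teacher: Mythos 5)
Your proof is correct and matches the paper's (implicit) argument: the corollary is a direct specialization of Theorem \ref{thm}, using that the rule \eqref{equ:optimalrecovery} is exact on $\mathbb{P}_{n+k}$ by construction, that positivity of the weights is assumed, that $\eta=3/4$ is the standing hypothesis of Section \ref{sec:3}, and that $1/\sqrt{1-3/4}+1=3$. Your side remark about positivity not being automatic from the $\ell_1$-minimization is a sensible clarification but nothing beyond what the statement already stipulates.
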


We present a toy example on the interval $[-1,1]$ to illustrate Theorem \ref{thm} on $\Omega=[-1,1]$. We are interested in a 40-degree hyperinterpolant $\mathcal{L}_{40}f$ of $f=\exp(-x^2)$ and $f=|x|^{5/2}$, with $\{p_{\ell}\}_{\ell=1}^{41}$ chosen as normalized Legendre polynomials $\{P_{\ell}\}_{\ell=0}^{40}$. The former test function $f=\exp(-x^2)$ is an analytic function (so smooth enough) and the latter $f= |x|^{5/2}$ is only continuous (not even differentiable).

Constructing $\mathcal{L}^{\text{S}}_{40}f$ requires a quadrature rule with exactness degree 80, thus one may consider a 41-point Gauss quadrature with exactness degree 81. Besides, we also construct $\mathcal{L}_{40}f$ using a 25-point Gauss-Legendre quadrature, a 50-point Clenshaw--Curtis quadrature, and a 186-point quadrature \eqref{equ:optimalrecovery} in equispaced points with exactness degree 49. These quadrature rules all have the exactness degree 49, which is far from the required degree 80 for $\mathcal{L}^{\text{S}}_{40}f$, but they also enable us to obtain hyperinterpolants with considerably small errors. On the other hand, the relaxation of quadrature exactness, suggested in Theorem \ref{thm}, slows the convergence rates of hyperinterpolants. That is, the $L^2$ error estimation of $\mathcal{L}^{\text{S}}_{40}f$ is controlled by $E_{40}(f)$, suggested by the estimation \eqref{equ:errororiginal} derived in Sloan's original work \cite{sloan1995polynomial}, while that of $\mathcal{L}_{40}f$ is controlled by $E_9(f)$, according to our error estimation \eqref{equ:error}.

The performance of $\mathcal{L}_{40}f$ in the approximation of both functions is displayed in Figures \ref{fig:interval1} and \ref{fig:interval2}, respectively. Our theoretical analysis of the effects of the relaxing quadrature exactness is also verified in both figures. Besides, the numerical results suggest that such effects may also be related to the smoothness of functions to be approximated. That is, the error of $\mathcal{L}^{\text{S}}_{40}f$ is much smaller than the errors of $\mathcal{L}_{40}f$ using three different quadrature rules for the analytic function $f=\exp(-x^2)$, but just slightly smaller than those for the non-differentiable function $f=|x|^{5/2}$. Moreover, it is pretty interesting that the hyperinterpolant $\mathcal{L}_{40}f$ with the 50-point Clenshaw--Curtis quadrature performs better than that using the 25-point Gauss--Legendre quadrature and the 186-point quadrature \eqref{equ:optimalrecovery} in equispaced points, though three quadrature rules have the same exactness degree $49$. This finding is worthy of further study. To the authors' best knowledge, the connection between the Clenshaw--Curtis quadrature and the performance of hyperinterpolation has not been established. Some possibly useful results that help us to establish such a connection can be found in Trefethen's famous paper \cite{trefethen2008gauss}.

\begin{figure}[htbp]
  \centering
  \includegraphics[width=\textwidth]{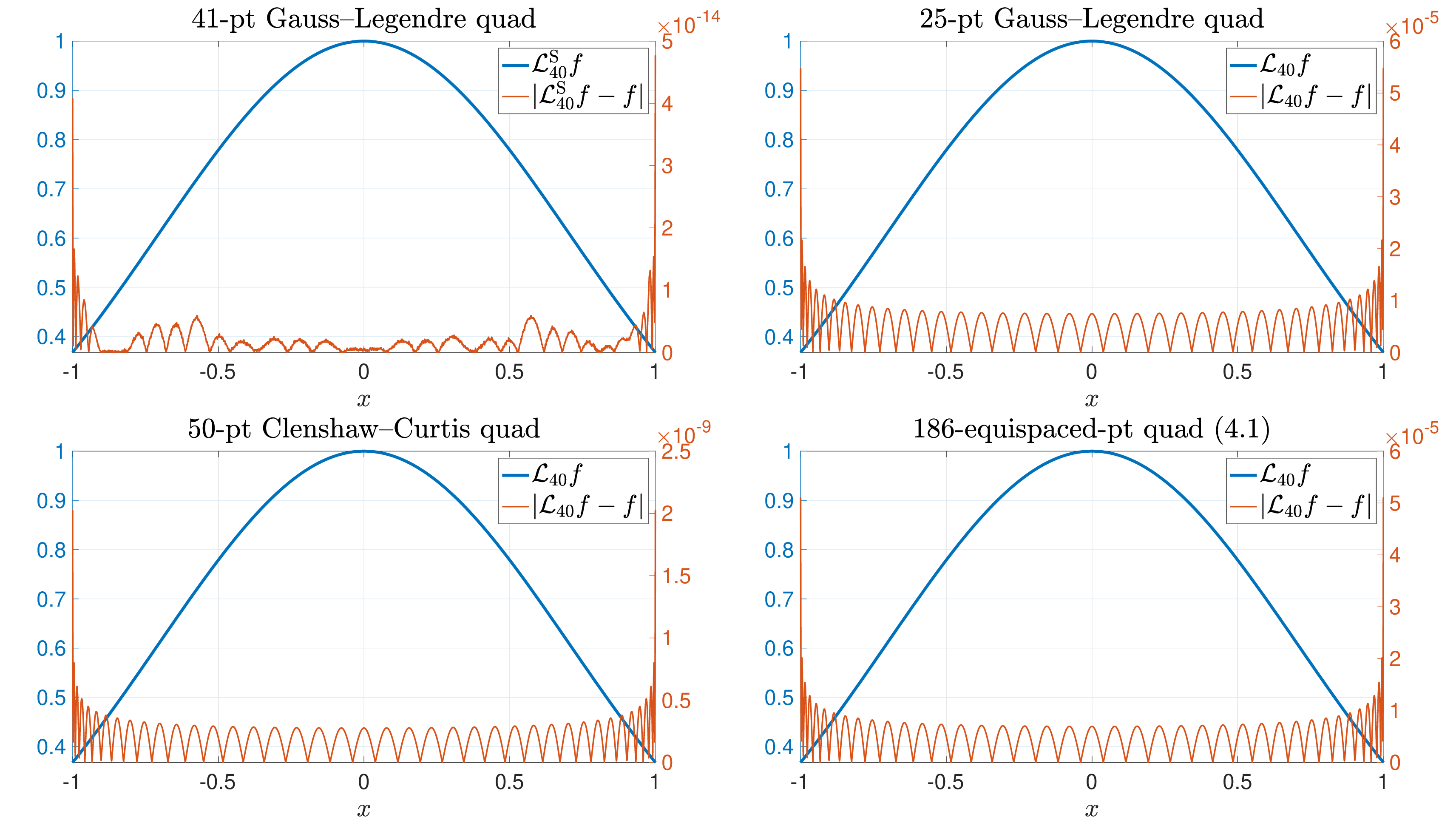}
  \caption{Hyperinterpolants $\mathcal{L}^{\text{S}}_{40}f$ and $\mathcal{L}_{40}f$ of $f=\exp(-x^2)$, constructed by various quadrature rules. The estimation of $\|\mathcal{L}^{\text{S}}_{40}f-f\|_2$ is controlled by $E_{40}(f)$, while that of $\|\mathcal{L}_{40}f-f\|_2$ by $E_{9}(f)$.}\label{fig:interval1}
\end{figure}

\begin{figure}[htbp]
  \centering
  \includegraphics[width=\textwidth]{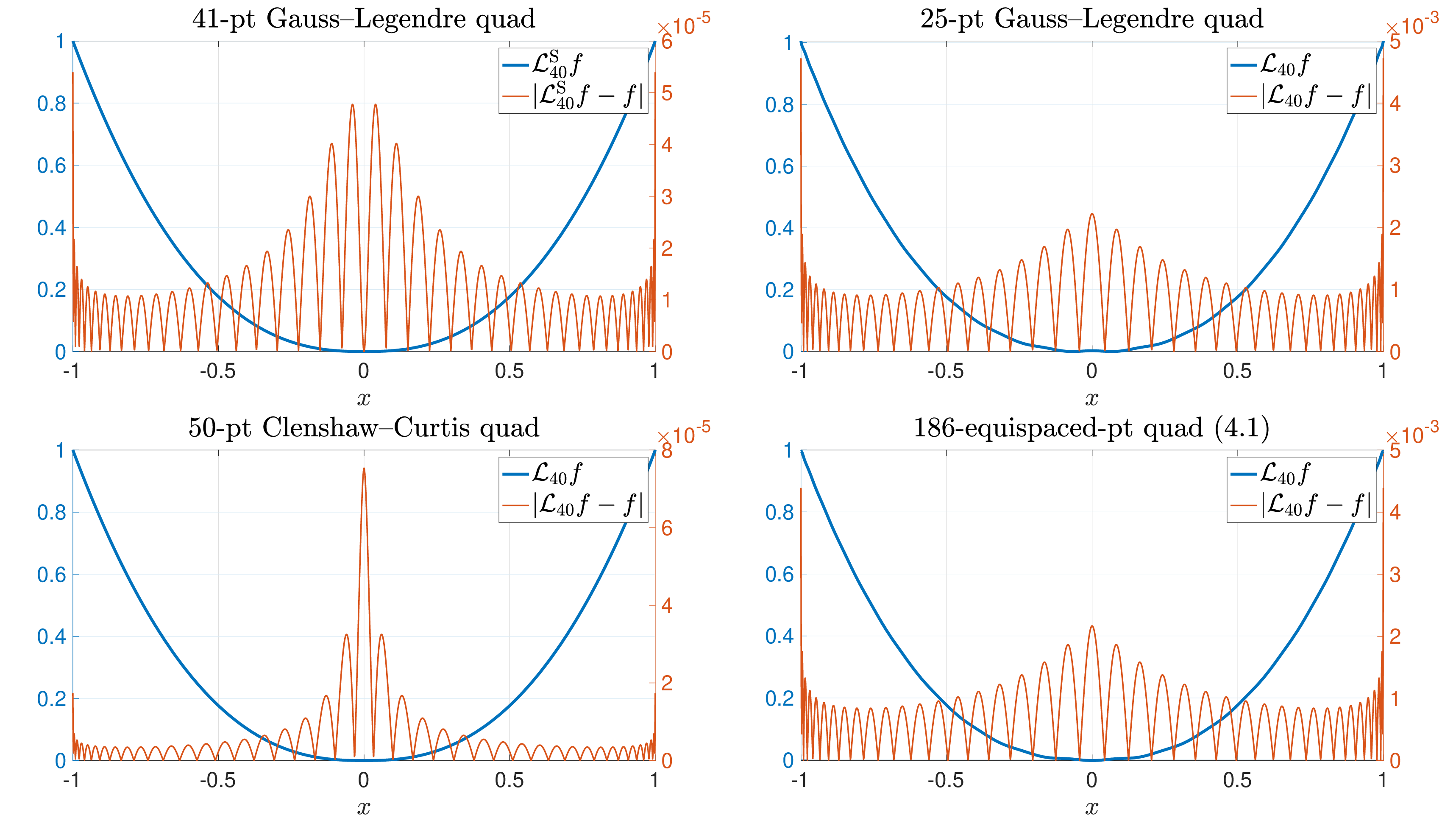}
  \caption{Hyperinterpolants $\mathcal{L}^{\text{S}}_{40}f$ and $\mathcal{L}_{40}f$ of $f=|x|^{5/2}$, constructed by various quadrature rules. The estimation of $\|\mathcal{L}^{\text{S}}_{40}f-f\|_2$ is controlled by $E_{40}(f)$, while that of $\|\mathcal{L}_{40}f-f\|_2$ by $E_{9}(f)$.}\label{fig:interval2}
\end{figure}

\subsection{The sphere}
Let $\Omega = \mathbb{S}^2\subset\mathbb{R}^3$ with $\text{d}\omega  = \omega (x)\text{d}x$, where $\omega(x)$ is an area measure on $\mathbb{S}^2$. Thus $V=\int_{\mathbb{S}^2}\text{d}\omega  = 4\pi$ denotes the surface area of $\mathbb{S}^2$. In this example, $\mathbb{P}_n$ can be regarded as the space of spherical polynomials of degree at most $n$. Let the basis $\{p_{\ell}\}_{\ell=1}^{d_n}$ be a set of orthonormal spherical harmonics $ \{ Y_{\ell,k}: \ell  = 0, 1,\ldots, n, k = 1,\ldots,2\ell +1\}$ , and the dimension of $\mathbb{P}_n$ is $d_n=(n+1)^2$. Many positive-weight quadrature rules can achieve the desired exactness degree, such as rules using spherical $t$-designs \cite{delsarte1991geometriae} and tensor-product quadrature rules from rules on the interval \cite{zbMATH01421286}, which are both designed on structural quadrature points. Thanks to the work of Mhaskar, Narcowich, and Ward \cite{mhaskar2001spherical}, it was also proved that positive-weight quadrature rules with desired polynomial exactness could be designed from scattered data. All of these rules requires $m=\mathcal{O}(k^2)$ points to achieve the exactness degree $k$. Thus roughly speaking, to construct an original hyperinterpolant requires $4cn^2$ points, where $c>0$ is some constant, while in the light of Theorem \ref{thm}, only $c(n+k)^2$ points with $0<k\leq n$ are needed.

For the sake of easy implementation, we discuss Theorem \ref{thm} with quadrature rules using spherical $t$-designs, which can be implemented easily and efficiently. A point set $\{x_1,x_2,\ldots,x_m\}\subset\mathbb{S}^2$ is said to be a \emph{spherical $t$-design} \cite{delsarte1991geometriae} if it satisfies
\begin{equation}\label{equ:sphericalquadrature}
\frac{1}{m}\sum_{j=1}^mv(x_j)=\frac{1}{4\pi}\int_{\mathbb{S}^2}v\text{d}\omega\quad\forall v\in\mathbb{P}_t.
\end{equation}
It can be seen that spherical $t$-design is  a set of points on the sphere such that an equal-weight quadrature rule in these points integrates all (spherical) polynomials up to degree $t$ exactly. In this paper, we employ well conditioned spherical $t$-designs \cite{MR2763659}, which are suitable for numerical integration and interpolation. The study in \cite{an2012regularized} revealed that well conditioned spherical $t$-designs can be used to realize  hyperinterpolation and regularization approximation  successfully. Well conditioned spherical $t$-designs require at least $(t+1)^2$ quadrature points to achieve the exactness degree $t$ \cite{MR2763659}. Thus, it requires at least $(2n+1)^2$ points to construct an original hyperinterpolant of degree $n$. However, thanks to Theorem \ref{thm}, we have the following result.

\begin{corollary}\label{cor:sphere}
Let $\langle\cdot,\cdot\rangle_m$ used in Definition \ref{def} be the quadrature rule \eqref{equ:sphericalquadrature} using a spherical $(n+k)$-design with $0<k\leq n$. The number $m$ of quadrature points should satisfy $m\geq (n+k+1)^2$. Under the conditions of Theorem \ref{thm} with $\eta = 3/4$, the exactness-relaxing hyperinterpolant $\mathcal{L}_nf$ satisfies
\begin{equation*}
\|\mathcal{L}_nf-f\|_2\leq 6\pi^{1/2}E_{k}(f).
\end{equation*}
In particular, if the spherical $(n+k)$-design with $m = (n+k+1)^2$ is used, then
\begin{equation*}
\|\mathcal{L}_nf-f\|_2\leq 6\pi^{1/2}E_{\sqrt{m}-n-1}(f).
\end{equation*}
\end{corollary}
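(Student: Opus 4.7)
The plan is to obtain Corollary \ref{cor:sphere} as a direct specialization of Theorem \ref{thm} to $\Omega = \mathbb{S}^2$, where the inner product $\langle\cdot,\cdot\rangle_m$ is taken to be the equal-weight rule \eqref{equ:sphericalquadrature} associated with a spherical $(n+k)$-design. First I would verify that the hypotheses of Definition \ref{def} and Theorem \ref{thm} are all in force: by definition of a spherical $(n+k)$-design, the rule \eqref{equ:sphericalquadrature} is exact on $\mathbb{P}_{n+k}$, so Assumption \ref{assumption} is satisfied with the relaxed exactness degree $n+k$; the weights $w_j = 4\pi/m$ are strictly positive; and the Marcinkiewicz--Zygmund property \eqref{equ:etaassumption} with $\eta = 3/4$ is assumed throughout Section \ref{sec:3}.

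Next I would handle the lower bound on $m$ by invoking the fact, recorded in the paragraph preceding the corollary, that every spherical $t$-design on $\mathbb{S}^2$ must contain at least $(t+1)^2$ points; applying this with $t = n+k$ yields $m \geq (n+k+1)^2$. It is worth emphasizing that this bound is sharper than what Theorem \ref{thm:number} would give for a generic positive-weight rule with exactness degree $n+k$: the equal-weight constraint imposes additional algebraic conditions beyond mere exactness, which accounts for the improvement.

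With these ingredients, the error estimate is a mechanical substitution into \eqref{equ:error}. On $\mathbb{S}^2$ we have $V = 4\pi$, hence $V^{1/2} = 2\sqrt{\pi}$, and with $\eta = 3/4$ one computes
$$\left(\frac{1}{\sqrt{1-\eta}} + 1\right)V^{1/2} = (2+1)\cdot 2\sqrt{\pi} = 6\sqrt{\pi},$$
which immediately yields $\|\mathcal{L}_n f - f\|_2 \leq 6\pi^{1/2} E_k(f)$. The ``in particular'' statement is then obtained by specializing to $m = (n+k+1)^2$, equivalently $k = \sqrt{m} - n - 1$, and substituting into the preceding inequality.

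I do not foresee a genuine obstacle here, since the corollary is essentially a plug-and-chug specialization of Theorem \ref{thm}. The only non-mechanical point is replacing the generic lower bound of Theorem \ref{thm:number} by the tighter Delsarte--Goethals--Seidel-type bound $(n+k+1)^2$ that applies specifically to equal-weight designs; this is the step that deserves a sentence of explicit justification in the final write-up.
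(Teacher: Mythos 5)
Your proposal is correct and follows essentially the same route as the paper: the corollary is the direct specialization of Theorem \ref{thm} to $\mathbb{S}^2$, with $V=4\pi$ and $\eta=3/4$ giving the constant $\left(\frac{1}{\sqrt{1-\eta}}+1\right)V^{1/2}=6\pi^{1/2}$, combined with the point-count claim $m\geq(n+k+1)^2$ for spherical $(n+k)$-designs taken from the paragraph preceding the corollary and the substitution $k=\sqrt{m}-n-1$ when $m=(n+k+1)^2$. One minor caution: calling $(n+k+1)^2$ a Delsarte--Goethals--Seidel-type bound overstates it (the DGS bound for a spherical $t$-design is roughly $(t/2+1)^2$, comparable to Theorem \ref{thm:number}); the $(t+1)^2$ figure is simply the paper's stated requirement for the designs it employs, so cite it as such rather than as a consequence of DGS.
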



We present a toy illustration on the sphere, making use of the well conditioned spherical $t$-designs \cite{MR2763659} with $m = (t+1)^2$. We are interested in a 25-degree hyperinterpolant $\mathcal{L}_{25}f$ of a Wendland function $f$: Let $\mathbf{z}_1=[1,0,0]^{\rm{T}}$, $\mathbf{z}_2=[-1,0,0]^{\rm{T}}$, $\mathbf{z}_3=[0,1,0]^{\rm{T}}$, $\mathbf{z}_4=[0,-1,0]^{\rm{T}}$, $\mathbf{z}_5=[0,0,1]^{\rm{T}}$, and $\mathbf{z}_6=[0,0,-1]^{\rm{T}}$, the testing function $f$ is defined as
\begin{equation}\label{equ:wendland}
f(\mathbf{x})=\sum_{i=1}^6\phi_2(\|\mathbf{z}_i-\mathbf{x}\|_2),
\end{equation}
where $\phi_2(r):=\tilde{\phi}_2\left(r/\delta_2\right)$ is a normalized Wendland function \cite{chernih2014wendland}, with
$$\tilde{\phi}_2(r):=\left(\max\{1-r,0\}\right)^6(35r^2+18r +3)/3$$
been an original Wendland function \cite{wendland1995piecewise} and $\delta_2=(9\Gamma(5/2))/(2\Gamma(3))$. According to the original definition of hyperinterpolation \eqref{equ:hyperinterpolation}, one shall use a spherical $50$-design and its corresponding quadrature rule to construct $\mathcal{L}_{25}^{\text{S}}f$. To tell the difference between $\mathcal{L}_{25}^{\text{S}}f$ and $\mathcal{L}_{25}f$, we also use a sphere $30$-design and its corresponding quadrature rule to construct $\mathcal{L}_{25}f$. Both designs are displayed in Figure \ref{fig:sphericaldesigns}.

\begin{figure}[htbp]
  \centering
  \includegraphics[width=0.78\textwidth]{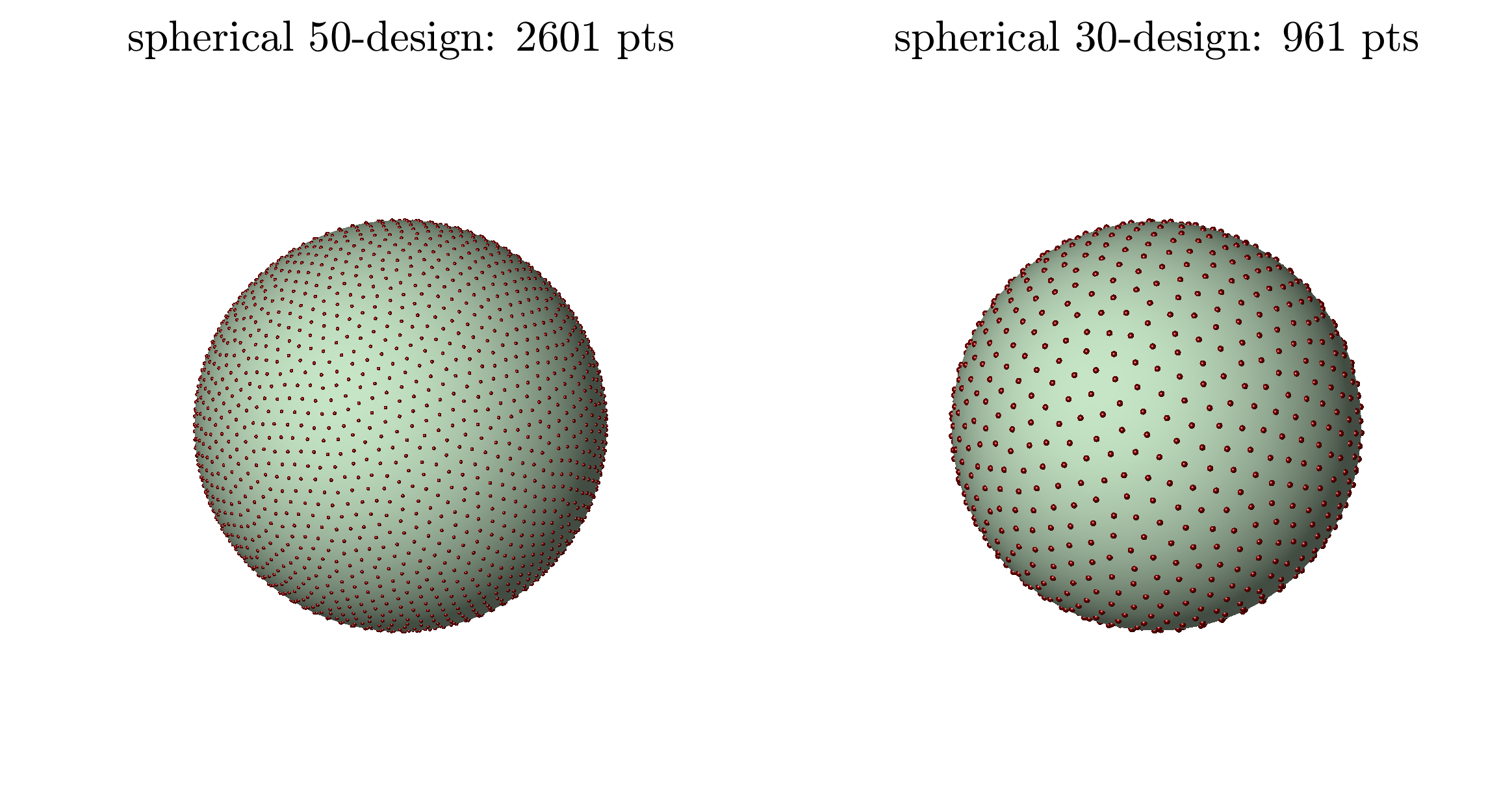}
  \caption{Spherical $50$- and $30$-designs, generated by the method proposed in \cite{MR2763659}.}\label{fig:sphericaldesigns}
\end{figure}

The original hyperinterpolant $\mathcal{L}^{\rm{S}}_{25}f$ of the Wendland-type function \eqref{equ:wendland} and the corresponding error are plotted in the upper row of Figure \ref{fig:sphere1}. According to Sloan \cite{sloan1995polynomial}, the $L^2$ error estimation of $\mathcal{L}^{\rm{S}}_{25}f$ is controlled by $E_{25}(f)$. Corollary \ref{cor:sphere} indicates that $\mathcal{L}_{25}f$ can be obtained using an exactness-relaxing quadrature rule. This is shown in the lower row in Figure \ref{fig:sphere1}, where a sphere $30$-design and its corresponding quadrature rule are used. Corollary \ref{cor:sphere} also suggests that the $L^2$ error estimation of $\mathcal{L}_{25}f$ is thus controlled by $E_{5}(f)$.

\begin{figure}[htbp]
  \centering
  \includegraphics[width=\textwidth]{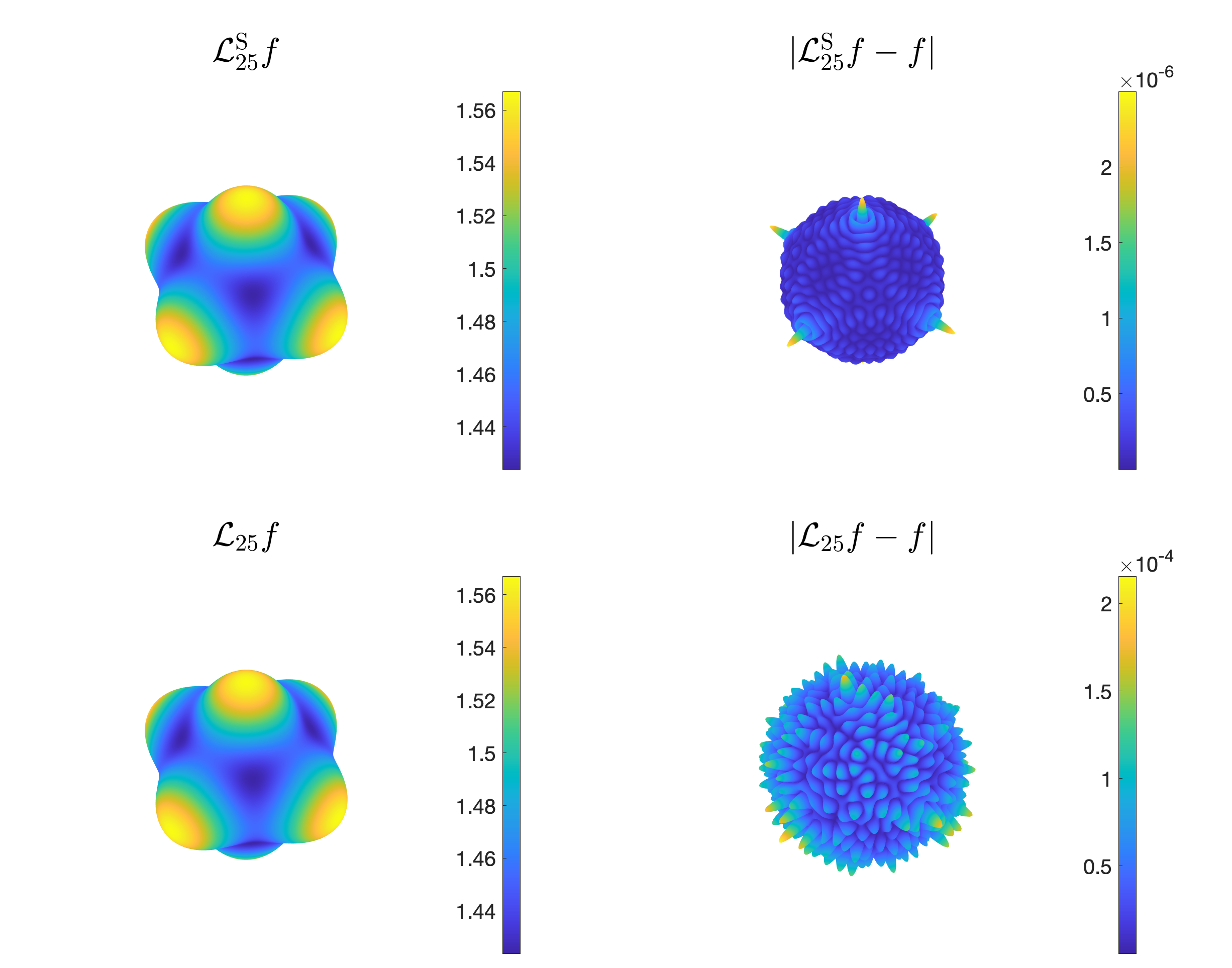}
  \caption{Hyperinterpolants $\mathcal{L}^{\rm{S}}_{25}f$ and $\mathcal{L}_{25}f$ of a Wendland-type function \eqref{equ:wendland}, constructed by spherical $t$-designs with $t=50$ (upper row) and $30$ (lower row), respectively. The estimation of $\|\mathcal{L}^{\text{S}}_{25}f-f\|_2$ is controlled by $E_{25}(f)$, while that of $\|\mathcal{L}_{25}f-f\|_2$ by $E_{5}(f)$.}\label{fig:sphere1}
\end{figure}

Along with the Wendland-type function \eqref{equ:wendland}, we additionally test the function $f(\mathbf{x})=f(x,y,z)=|x+y+z|$
with $\mathbf{x}=[x,y,z]^{\rm{T}}\in\mathbb{S}^2$. Similar to the above test, the original hyperinterpolant $\mathcal{L}^{\rm{S}}_{25}f$ and the corresponding error are plotted in the upper row of Figure \ref{fig:sphere2}, and the hyperinterpolant $\mathcal{L}_{25}f$ and its error are shown in the lower row of Figure \ref{fig:sphere2}. This test also validates our theory on the effects of the relaxing quadrature exactness. Moreover, as the function $f(x,y,z)=|x+y+z|$ is not differentiable, similar to the non-differentiable function $f(x)=|x|^{5/2}$ on $[-1,1]$, we see than the error of $\mathcal{L}^{\rm{S}}_{25}f$ is just slightly smaller than that of $\mathcal{L}_{25}f$.

\begin{figure}[htbp]
  \centering
  \includegraphics[width=\textwidth]{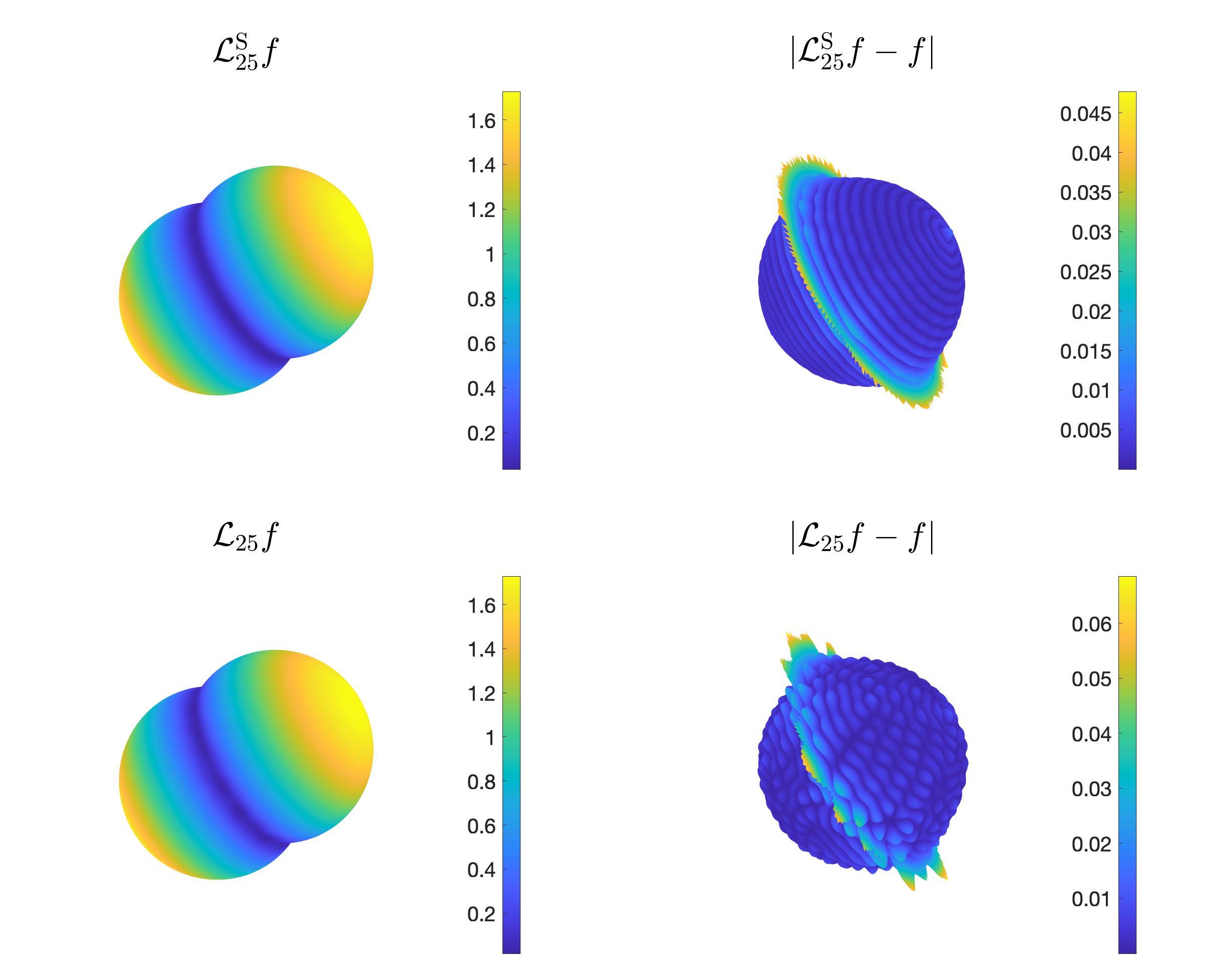}
  \caption{Hyperinterpolants $\mathcal{L}^{\rm{S}}_{25}f$ and $\mathcal{L}_{25}f$ of $f(\mathbf{x})=f(x,y,z)=|x+y+z|$, constructed by spherical $t$-designs with $t=50$ (upper row) and $30$ (lower row), respectively. The estimation of $\|\mathcal{L}^{\text{S}}_{25}f-f\|_2$ is controlled by $E_{25}(f)$, while that of $\|\mathcal{L}_{25}f-f\|_2$ by $E_{5}(f)$.}\label{fig:sphere2}
\end{figure}

We close this paper with a more detailed study on the error behavior of the exactness-relaxing hyperinterpolation on the sphere. In the above tests, we let $k=25$ (for constructing $\mathcal{L}^{\text{S}}_{25}f$) and $5$ (for constructing $\mathcal{L}_{25}f$). Now letting $k$ range from $1$ to $25$, that is, the exactness from $26$ to $50$, the $L^2$ and uniform errors of $\mathcal{L}_{25}f$ in the approximation of both functions are displayed in Table \ref{tab:sphere}. We see from Table \ref{tab:sphere} that, in general, the errors $\|\mathcal{L}_nf-f\|_2$ and $\|\mathcal{L}_nf-f\|_{\infty}$ reduce as $k$ increases. This behavior of $\|\mathcal{L}_nf-f\|_2$ is predicted by our theory: Theorem \ref{thm} indicates that the $L^2$ error of $\mathcal{L}_{n}f$ using the quadrature rule with exactness $n+k$ is controlled by $E_k(f)$.

\begin{table}[htbp]
  \centering
  \setlength{\abovecaptionskip}{0pt}
\setlength{\belowcaptionskip}{10pt}
  \caption{Performance of the exactness-relaxing hyperinterpolation: $n=25$ and $k$ ranges from $1$ to $25$.}\label{tab:sphere}
  \ttfamily
    \begin{tabular}{|c||c|c||c|c|}
   \hline
    ~ & \multicolumn{2}{c||}{$n=25$} & \multicolumn{2}{c|}{$n=25$}\\
    ~ & \multicolumn{2}{c||}{$f$: Wendland function \eqref{equ:wendland}} & \multicolumn{2}{c|}{$f(x,y,z)=|x+y+z|$} \\ \hline
          $(k,n+k,m)$ & $\|\mathcal{L}_nf-f\|_2$ & $\|\mathcal{L}_nf-f\|_{\infty}$ & $\|\mathcal{L}_nf-f\|_2$ & $\|\mathcal{L}_nf-f\|_{\infty}$ \\ \hline
(1,26,729) & 1.4703e-04 & 1.1973e-02 & 1.3806e-03 & 1.4653e-01  \\\hline
(2,27,784) & 1.0036e-04 & 7.2393e-03 & 5.9539e-04 & 7.6914e-02  \\\hline
(3,28,841) & 7.7225e-05 & 5.6280e-03 & 5.1663e-04 & 9.2067e-02  \\\hline
(4,29,900) & 3.6550e-06 & 2.2721e-04 & 4.7716e-04 & 6.3882e-02  \\\hline
(5,30,961) & 2.7813e-06 & 2.1562e-04 & 4.3549e-04 & 6.8573e-02  \\\hline
(6,31,1024) & 9.0144e-07 & 7.3522e-05 & 4.1188e-04 & 6.7465e-02  \\\hline
(7,32,1089) & 6.3510e-07 & 5.4311e-05 & 4.1158e-04 & 6.9123e-02  \\\hline
(8,33,1156) & 1.5667e-07 & 1.4221e-05 & 3.8191e-04 & 5.7172e-02  \\\hline
(9,34,1225) & 1.2137e-07 & 1.0454e-05 & 3.7573e-04 & 5.7909e-02  \\\hline
(10,35,1296) & 6.0979e-08 & 7.9442e-06 & 3.7698e-04 & 5.7189e-02  \\\hline
(11,36,1369) & 5.3640e-08 & 5.4959e-06 & 3.7237e-04 & 6.0998e-02  \\\hline
(12,37,1444) & 1.8896e-08 & 3.3341e-06 & 3.6456e-04 & 5.6171e-02  \\\hline
(13,38,1521) & 1.9095e-08 & 3.7055e-06 & 3.6651e-04 & 5.5231e-02  \\\hline
(14,39,1600) & 1.6651e-08 & 3.2061e-06 & 3.6385e-04 & 5.3134e-02  \\\hline
(15,40,1681) & 1.4991e-08 & 2.6047e-06 & 3.5941e-04 & 5.2498e-02  \\\hline
(16,41,1764) & 1.4137e-08 & 2.9486e-06 & 3.6263e-04 & 5.2798e-02  \\\hline
(17,42,1849) & 1.3659e-08 & 2.5557e-06 & 3.5752e-04 & 5.0185e-02  \\\hline
(18,43,1936) & 1.3509e-08 & 2.5579e-06 & 3.5447e-04 & 5.0666e-02  \\\hline
(19,44,2025) & 1.3433e-08 & 2.5896e-06 & 3.5454e-04 & 5.0915e-02  \\\hline
(20,45,2116) & 1.3354e-08 & 2.6336e-06 & 3.5534e-04 & 5.0098e-02  \\\hline
(21,46,2209) & 1.3318e-08 & 2.5630e-06 & 3.5320e-04 & 4.8124e-02  \\\hline
(22,47,2304) & 1.3309e-08 & 2.4906e-06 & 3.5443e-04 & 5.0818e-02  \\\hline
(23,48,2401) & 1.3309e-08 & 2.5130e-06 & 3.5375e-04 & 4.7735e-02  \\\hline
(24,49,2500) & 1.3294e-08 & 2.4568e-06 & 3.5180e-04 & 4.8141e-02  \\\hline
(25,50,2601) & 1.3294e-08 & 2.4959e-06 & 3.5146e-04 & 4.7660e-02  \\\hline
  \end{tabular}
  \end{table}

 \section*{Acknowledgements}
We thank both referees for their valuable suggestions and remarks
which improved this paper.
\bibliographystyle{siam}

\bibliography{myref}   

\begin{thebibliography}{10}

\bibitem{MR2763659}
{\sc C.~An, X.~Chen, I.~H. Sloan, and R.~S. Womersley}, {\em Well conditioned
  spherical designs for integration and interpolation on the two-sphere}, SIAM
  J. Numer. Anal., 48 (2010), pp.~2135--2157.

\bibitem{an2012regularized}
\leavevmode\vrule height 2pt depth -1.6pt width 23pt, {\em Regularized least
  squares approximations on the sphere using spherical designs}, SIAM J. Numer.
  Anal., 50 (2012), pp.~1513--1534.

\bibitem{an2021lasso}
{\sc C.~An and H.-N. Wu}, {\em Lasso hyperinterpolation over general regions},
  SIAM J. Sci. Comput., 43 (2021), pp.~A3967--A3991.

\bibitem{caliari2007hyperinterpolation}
{\sc M.~Caliari, S.~De~Marchi, and M.~Vianello}, {\em Hyperinterpolation on the
  square}, J. Comput. Appl. Math., 210 (2007), pp.~78--83.

\bibitem{caliari2008hyperinterpolation}
\leavevmode\vrule height 2pt depth -1.6pt width 23pt, {\em Hyperinterpolation
  in the cube}, Comput. Math. Appl., 55 (2008), pp.~2490--2497.

\bibitem{chernih2014wendland}
{\sc A.~Chernih, I.~H. Sloan, and R.~S. Womersley}, {\em Wendland functions
  with increasing smoothness converge to a {G}aussian}, Adv. Comput. Math., 40
  (2014), pp.~185--200.

\bibitem{Clenshaw1960}
{\sc C.~W. Clenshaw and A.~R. Curtis}, {\em A method for numerical integration
  on an automatic computer}, Numer. Math., 2 (1960), pp.~197--205.

\bibitem{dai2006generalized}
{\sc F.~Dai}, {\em On generalized hyperinterpolation on the sphere}, Proc.
  Amer. Math. Soc., 134 (2006), pp.~2931--2941.

\bibitem{MR760629}
{\sc P.~J. Davis and P.~Rabinowitz}, {\em Methods of numerical integration},
  Computer Science and Applied Mathematics, Academic Press, Inc., Orlando, FL,
  second~ed., 1984.

\bibitem{delsarte1991geometriae}
{\sc P.~Delsarte, J.-M. Goethals, and J.~J. Seidel}, {\em Spherical codes and
  designs}, Geom. Dedicata, 6 (1977), pp.~363--388.

\bibitem{devore2019computing}
{\sc R.~DeVore, S.~Foucart, G.~Petrova, and P.~Wojtaszczyk}, {\em Computing a
  quantity of interest from observational data}, Constr. Approx., 49 (2019),
  pp.~461--508.

\bibitem{filbir2011marcinkiewicz}
{\sc F.~Filbir and H.~N. Mhaskar}, {\em Marcinkiewicz--{Z}ygmund measures on
  manifolds}, J. Complexity, 27 (2011), pp.~568--596.

\bibitem{gautschi1983and}
{\sc W.~Gautschi}, {\em How and how not to check {G}aussian quadrature
  formulae}, BIT, 23 (1983), pp.~209--216.

\bibitem{hansen2009norm}
{\sc O.~Hansen, K.~Atkinson, and D.~Chien}, {\em On the norm of the
  hyperinterpolation operator on the unit disc and its use for the solution of
  the nonlinear {P}oisson equation}, IMA J. Numer. Anal., 29 (2009),
  pp.~257--283.

\bibitem{MR2274179}
{\sc K.~Hesse and I.~H. Sloan}, {\em Hyperinterpolation on the sphere}, in
  Frontiers in Interpolation and Approximation, vol.~282 of Pure Appl. Math.
  (Boca Raton), Chapman \& Hall/CRC, Boca Raton, 2007, pp.~213--248.

\bibitem{le2001uniform}
{\sc T.~Le~Gia and I.~Sloan}, {\em The uniform norm of hyperinterpolation on
  the unit sphere in an arbitrary number of dimensions}, Constr. Approx., 17
  (2001), pp.~249--265.

\bibitem{Marcinkiewicz1937}
{\sc J.~Marcinkiewicz and A.~Zygmund}, {\em Sur les fonctions ind\'ependantes},
  Fund. Math., 29 (1937), pp.~60--90.

\bibitem{mhaskar2001spherical}
{\sc H.~N. Mhaskar, F.~J. Narcowich, and J.~D. Ward}, {\em Spherical
  {M}arcinkiewicz--{Z}ygmund inequalities and positive quadrature}, Math.
  Comp., 70 (2001), pp.~1113--1130.

\bibitem{reimer2002generalized}
{\sc M.~Reimer}, {\em Generalized hyperinterpolation on the sphere and the
  {N}ewma--{S}hapiro operators}, Constr. Approx., 18 (2002), pp.~183--204.

\bibitem{sloan1995polynomial}
{\sc I.~H. Sloan}, {\em Polynomial interpolation and hyperinterpolation over
  general regions}, J. Approx. Theory, 83 (1995), pp.~238--254.

\bibitem{zbMATH01421286}
{\sc I.~H. Sloan and R.~S. Womersley}, {\em The uniform error of
  hyperinterpolation on the sphere}, in Advances in Multivariate Approximation,
  vol.~107 of Mathematical Research, Wiley-VCH,Berlin, 1999, pp.~289--306.

\bibitem{sloan2012filtered}
{\sc I.~H. Sloan and R.~S. Womersley}, {\em Filtered hyperinterpolation: a
  constructive polynomial approximation on the sphere}, GEM Int. J. Geomath., 3
  (2012), pp.~95--117.

\bibitem{sommariva2021numerical}
{\sc A.~Sommariva and M.~Vianello}, {\em Numerical hyperinterpolation over
  spherical triangles}, Math. Comput. Simulation, 190 (2021), pp.~15--22.

\bibitem{trefethen2008gauss}
{\sc L.~N. Trefethen}, {\em Is {G}auss quadrature better than
  {C}lenshaw--{C}urtis?}, SIAM Rev., 50 (2008), pp.~67--87.

\bibitem{trefethen2022exactness}
\leavevmode\vrule height 2pt depth -1.6pt width 23pt, {\em Exactness of
  quadrature formulas}, SIAM Rev., 64 (2022), pp.~132--150.

\bibitem{wang2014norm}
{\sc H.~Wang, K.~Wang, and X.~Wang}, {\em On the norm of the hyperinterpolation
  operator on the $d$-dimensional cube}, Comput. Math. Appl., 68 (2014),
  pp.~632--638.

\bibitem{wendland1995piecewise}
{\sc H.~Wendland}, {\em Piecewise polynomial, positive definite and compactly
  supported radial functions of minimal degree}, Adv. Comput. Math., 4 (1995),
  pp.~389--396.

\end{thebibliography}

\end{document}